\def\maxwidth{ %
  \ifdim\Gin@nat@width>\linewidth
    \linewidth
  \else
    \Gin@nat@width
  \fi
}
\definecolor{fgcolor}{rgb}{0.345, 0.345, 0.345}
\definecolor{shadecolor}{rgb}{.97, .97, .97}
\definecolor{messagecolor}{rgb}{0, 0, 0}
\definecolor{warningcolor}{rgb}{1, 0, 1}
\definecolor{errorcolor}{rgb}{1, 0, 0}
\newenvironment{knitrout}{}{} 
\theoremstyle{plain}
\newtheorem{thm}{Theorem}
\newtheorem{prop}{Proposition}
\newtheorem{cor}{Corollary}
\theoremstyle{definition}
\theoremstyle{remark}
\newtheorem*{rem}{Remark}
\journal{\ }
\begin{document}
\begin{frontmatter}

\title{Two-stage robust optimization for orienteering problem with stochastic weights}

\author[institute1,institute3]{Ke Shang}
\author[institute2]{Felix T.S. Chan}
\author[institute1]{Stephen Karungaru\corref{cor}}\cortext[cor]{Department of Information Science and Intelligent Systems, The University of Tokushima, Japan}\ead{karunga@is.tokushima-u.ac.jp}
\author[institute1]{Kenji Terada}
\author[institute3]{Zuren Feng}
\author[institute3]{Liangjun Ke}

\address[institute1]{Department of Information Science and Intelligent Systems, The University of Tokushima, Japan}
\address[institute3]{State Key Laboratory for Manufacturing Systems Engineering, Xi'an Jiaotong University, Xi'an, China}
\address[institute2]{Department of Industrial and Systems Engineering, The Hong Kong Polytechnic University, Hung Hom, Hong Kong}

\begin{abstract}
In this paper, the two-stage orienteering problem with stochastic weights (OPSW) is considered, where the first-stage problem is to plan a path under the uncertain environment and the second-stage problem is recourse action to make sure that the length constraint is satisfied after the uncertainty is realized. Two recourse models are introduced based on two different uncertainty realization ways, one is based on sequentially realized weights which leads to the recourse model proposed by \cite{evers2014two} and the other is based on concurrently realized weights which leads to a new recourse model with less variables and less constraints and is computationally more efficient. Subsequently two two-stage robust models are introduced for OPSW based on the two different recourse models, and the relationships between the two-stage robust models and their corresponding static robsut models are investigated. Theoretical conclusions are drawn which show that the two-stage robust models are equivalent to their corresponding static robust models with the box uncertainty set defined, and the two two-stage robust models are also equivalent to each other even though they are based on different recourse models. A case study is presented by comparing the two-stage robust models with an one-stage robust model for OPSW. The numerical results of the comparative studies show the effectiveness and superiority of the proposed two-stage robust models for dealing with the two-stage OPSW. 
\end{abstract}

\begin{keyword}
two-stage robust optimization \sep stochastic orienteering problem \sep integer recourse \sep box uncertainty set
\end{keyword}

\end{frontmatter}

\newpage
\section{Introduction}

The orienteering problem (OP) is a routing problem which has been widely studied in the past few decades. It was first introduced by \cite{golden1987orienteering} and has been developed in terms of the problem variants, solution algorithms and applications. The original OP aims at planning a path which starts and ends at the depot location, and visits a subset of nodes in order to maximize the total collected score while the length of the path cannot exceed a predefined budget, and each node can only be visited at most one time. The OP has a wide practical application background. A few examples such as Unmanned Aerial Vehicle (UAV) mission planning (\cite{mufalli2012simultaneous,evers2014robust}), tourist trip design problem (\cite{vansteenwegen2007mobile,gavalas2014survey}) and mobile crowdsourcing problem (\cite{howe2008crowdsourcing,yuen2011survey}). More detailed surveys on OP are given in \cite{vansteenwegen2011orienteering,gunawan2016orienteering}.

The stochastic orienteering problem (SOP) is a variant of OP, which assumes that some parameters in OP are stochastic and uncertain such as the score associated with each node and the weight (or distance) associated with each arc. SOP is more appropriate than OP in practical situations. For example, in a practical environment traffic congestion affects the travel time between nodes. \cite{ilhan2008orienteering} first considered uncertainties in the score of nodes and the resulting SOP is called OP with stochastic profits (OPSP). \cite{campbell2011orienteering,evers2014two} considered uncertainties in the travel and service times and the resulting SOP is called OP with stochastic travel and service times (OPSTS) or OP with stochastic weights (OPSW). Other variants include the dynamic stochastic OP (DSOP) with stochastic time-dependent travel times (\cite{lau2012dynamic,varakantham2013optimization}) and the stochastic OPTW (SOPTW) with stochastic waiting time (\cite{zhang2014priori}).

In this paper, we focus on the OPSW where the uncertainties lie in the weights of the arcs. Some works on OPSW have been done in recent years. \cite{campbell2011orienteering} considered the OPSTS in which a penalty is incurred if a commitment to a node is made but not completed. A variant of VNS for the OPSTS is proposed and its performance is evaluated by comparing with a dynamic programming (DP) approach. \cite{evers2014two} introduced a two-stage stochastic programming model for the OPSW. The first-stage problem is to plan a path. The second-stage problem is a recourse action after the uncertain weights realized, which aborts the execution of the first-stage path and enforces a direct return to the depot. The objective is to maximize the first-stage path score plus the expected loss of the score due to the recourse action. They presented a Sample Average Approximation (SAA) approach and an OPSW heuristic to solve the problem and the performance of the two approaches were evaluated. \cite{evers2014robust} applied the robust optimization (RO) methodology to build robust models for UAV mission planning with uncertain fuel usage between targets, which is an OPSW in nature. The performance of the robust models are studied in terms of the different uncertainty sets and the feasibility of the robust solutions.

Inspired by the recourse model proposed by \cite{evers2014two}, we consider the two-stage OPSW, i.e. OPSW with recourse action. The first-stage problem is to plan a path with the stochastic weights unrevealed. The second-stage problem is a recourse action to avoid the violation of the length budget after the uncertainty realized. The recourse action is to abort the execution of the first-stage path and enforce a direct return to the depot. This kind of recourse action is necessary especially in the UAV mission planning. The UAV has to return to the depot safely in the uncertain environment. We notice that the realization way of the uncertainty is not unique. For example, the uncertain weights of the first-stage path can be realized sequentially during the path execution, or the uncertain weights of the first-stage path can be realized concurrently at the beginning of the path execution. Different realization ways for the uncertainty can lead to different recourse models. We thus define two realization ways of the uncertainty: \emph{Sequential realization} and \emph{Concurrent realization}. The \emph{Sequential realization} way leads to the recourse model proposed by \cite{evers2014two}, and the \emph{Concurrent realization} way leads to a new model with less variables and less constraints, which is computationally more attractive.

Two-stage robust optimization (RO), also known as adjustable RO and can be extended to the multi-stage situation, was initially introduced by \cite{ben2004adjustable}. Compared with traditional one-stage RO, two-stage RO divides the decision variables into ``here and now" decisions and ``wait and see" decisions, which is more flexible and is suitable for modeling two-stage problems. It has been successfully applied to different applications such as unit commitment (\cite{an2015exploring,wang2016two}), network flow (\cite{atamturk2007two,ordonez2007robust}) and portfolio optimization (\cite{takeda2008adjustable}). In this paper, we apply the two-stage RO paradigm to the two-stage OPSW for the first time and introduce two two-stage RO models based on two different recourse models. The two-stage RO models introduced in this paper are with binary recourse decisions and this kind of problem has largely resisted solution so far (\cite{hanasusanto2015k}). Computing an optimal adjustable robust solution is often intractable since it requires computing a solution for all possible realizations of the uncertainties (\cite{feige2007robust}). Instead of solving the two-stage RO model directly, \cite{bertsimas2015tight} studied the performance of the static solutions for two-stage adjustable robust linear optimization problems and presented a tight characterization of the conditions under which a static robust solution is optimal for the two-stage robust problem. From this point of view, we introduce two static robust models for the OPSW which correspond to the two two-stage robust models respectively, and study their performance and the relationships with the two-stage robust models. We prove that with the \emph{box uncertainty set} defined, the two-stage robust models are equivalent to their corresponding static robust models, and the two two-stage robust models are also equivalent to each other even though they are based on different recourse models. These conclusions we obtained indicate that the two-stage robust models for OPSW can be solved to optimality by solving their corresponding static robust models, and also we only need to use one static robust model, which is based on the second recourse model, to deal with two different uncertainty realization ways.

The main contributions of this paper are summarized as follows:
\begin{enumerate}
\item Two recourse models are presented for the two-stage OPSW: one is the recourse model with \emph{Sequential realization} and the other is the recourse model with \emph{Concurrent realization}.
\item Two two-stage robust models are presented for the first time for the OPSW based on the two different recourse models.
\item Three theorems are established which show the equivalence between the two-stage robust models and their corresponding static robust models.
\item The two-stage robsut models for OPSW are evaluated numerically by comparing with one-stage robust model for OPSW.
\end{enumerate}

The remainder of the paper is organized as follows. First the deterministic OP is described in Section \ref{section:op}. Section \ref{section:2op} describes the two-stage OPSW and introduces two recourse models with different uncertainty realization ways. Section \ref{section:2ro} introduces two two-stage robust models for OPSW and draws some theoretical conclusions of the equivalence between the two-stage robust models and their corresponding static robust models. A case study is presented in Section \ref{section:cs} and we conclude the whole paper in Section \ref{section:conclusion}.

\section{The deterministic OP}
\label{section:op}
In a deterministic OP, a set of vertices $N$ is given with $|N|$ as its cardinality. Each vertex $i\in N$ has a score $s_i$ associated with it. Denote $0$ as the depot location where $0\notin N$ and $N^+=N\cup\{0\}$. The goal is to plan a path with length limit $L$, that starts and ends at the depot and visits some vertices in order to maximize the sum of the collected scores. Each vertex is visited at most one time.

Suppose all nodes $N^+$ are on a complete graph $G=(N^+,A)$ where $A$ is the set of arcs connecting the vertices in $N^+$. The weight of each arc $(i,j)\in A$ is $d_{ij}$, representing the Euclidean distance from $i$ to $j$. Let $x_{ij}$ be a binary decision variable, where $x_{ij}=1$ if and only if arc $(i,j)$ is visited by the path, otherwise $x_{ij}=0$. An auxiliary variable $u_i$ is used to denote the position of node $i$ in the path. The formulation of the deterministic OP is as follows:

\begin{subequations}\label{eq:op1}
(DOP):\begin{alignat}{2}
    \text{maximize}\qquad & \sum_{i\in N}s_i\sum_{j\in N^+\setminus\{i\}}x_{ij}\\
    \text{subject to }\qquad & \sum_{(i,j)\in A}d_{ij} x_{ij} \leq L\label{opc:1}\\
    & \sum_{i\in N}x_{0i}=\sum_{i\in N}x_{i0}=1\label{opc:2}\\
    & \sum_{i\in N^+\setminus\{j\}}x_{ij}=\sum_{i\in N^+\setminus\{j\}}x_{ji}\leq1, \forall j\in N\label{opc:3}\\
    & u_i-u_j+1\leq(1-x_{ij})|N|, \forall i,j\in N\label{opc:4}\\
    & 1\leq u_i\leq|N|, \forall i\in N\label{opc:5}\\
    & x_{ij}\in \{0,1\}, \forall (i,j)\in A\label{opc:6}
    \end{alignat}
  \end{subequations}
Constraint \eqref{opc:1} is the path length constraint. Constraint \eqref{opc:2} guarantees that the path starts and ends at the depot. Constraint \eqref{opc:3} is the flow conservation constraint ensuring that a vertex is visited at most once. Constraint \eqref{opc:4} ensures the connectivity of the path. Constraint \eqref{opc:5} and \eqref{opc:6} are the boundary and integrality constraints on the auxiliary variables and decision variables respectively. 

\section{The two-stage orienteering problem with stochastic weights}
\label{section:2op}

Suppose the weight of each arc $(i,j)$ is stochastic and uncertain, denote the stochastic weight of arc $(i,j)$ as $\tilde{d}_{ij}$. In this paper, we consider $\tilde{d}_{ij}$ as a symmetrically distributed random variable on the interval $[\bar{d}_{ij}-\hat{d}_{ij},\bar{d}_{ij}+\hat{d}_{ij}]$, where $\bar{d}_{ij}$ is the expected value of $\tilde{d}_{ij}$ and $\hat{d}_{ij}$ is the maximum deviation of $\tilde{d}_{ij}$ from its expected value. For simplicity and convenience, we use $d_{ij}$ to denote the realizations of $\tilde{d}_{ij}$.

We consider the two-stage OPSW, i.e. OPSW with recourse action. In the two-stage OPSW, the first-stage problem is to plan a path with the stochastic weights unrevealed. Due to the randomicity and uncertainty of the stochastic weights, the first-stage path may violate constraint \eqref{opc:1} after the uncertainty is realized. So the second-stage problem is a recourse action to avoid constraint violation after the uncertainty is realized. The recourse action is to abort the execution of the first-stage path and enforce a direct return to the depot.

The ways that the uncertainty realized are not unique. Different realization ways for the uncertainty will lead to different recourse models. We introduce two realization ways for the uncertainty in two-stage OPSW: \emph{Sequential realization} and \emph{Concurrent realization}. \emph{Sequential realization} means that the stochastic weights of the first-stage path are realized sequentially during the path execution. For example, the first-stage path is executed to node $i$ and the next node is $j$, then the stochastic weight $\tilde{d}_{ij}$ is realized and the stochastic weights of all other unvisited arcs remain unrevealed. \emph{Concurrent realization} means that all the stochastic weights of the first-stage path are realized concurrently at the beginning of the path execution and the stochastic weights of all other arcs remain unrevealed.

Based on the above two realization ways for uncertainty, we now present two recourse models for two-stage OPSW.

\subsection{Recourse model with Sequential realization}

The recourse model with sequentially realized weights was initially introduced by \cite{evers2014two}. In this model, the uncertain weights of the first-stage path are realized sequentially during the path execution. The uncertainty realization rule is: suppose the first-stage path is executed to node $i$ and the next node is $j$, then the stochastic weight $\tilde{d}_{ij}$ is realized and the stochastic weights of all other unvisited arcs remain unrevealed. Then the recourse action is to abort the execution of the first-stage path and enforce a direct return to the depot from node $i$ at the moment that the remaining length budget is insufficient to support a visit to the next node $j$ plus the expected return length from the next node $j$ to the depot. \cite{evers2014two} assumed that a certain amount of extra length budget is available to cover the maximum deviation from the expected length on any of the arcs to the depot, this safety stock not being part of the length limit $L$ used in the model.

Denoting the first-stage path as vector $\mathbf{x}$ which contains all $x_{ij}$, and the weight realizations as vector $\mathbf{d}$ which contains all $d_{ij}$. Let $x_{ijk}$ be a binary variable, $x_{ijk}=1$ if arc $(i,j)$ is the $k$th arc in the first-stage path, otherwise $x_{ijk}=0$; let $y_i$ be a binary variable, $y_i=1$ if node $i$ is in the first-stage path but cannot be reached as a result of the recourse action, otherwise $y_i=0$; let $z_k$ be a binary variable, $z_k=1$ if the $k$th node in the first-stage path cannot be reached as a result of the recourse action, otherwise $z_k=0$. With the first-stage path $\mathbf{x}$ and the weight realizations $\mathbf{d}$, the recourse problem of the two-stage OPSW with sequentially realized weights is formulated as follows:

\begin{subequations}\label{eq:rs}
(Recourse-Sequential):
    \begin{alignat}{2}
    RS(\mathbf{x},\mathbf{d})=\max \qquad & -\sum_{i\in N}s_i y_i\label{eq:re1}\\
    \text{subject to }\qquad & x_{0j1}\geq x_{0j},\forall j\in N\label{2rm1:1}\\
    & x_{ijk}\geq x_{ij} + \sum_{l\in N^+}x_{li(k-1)} -1,\forall i,j\in N, k=1,...,|N|\label{2rm1:2}\\
    & \sum_{k=1}^{K}\sum_{(i,j)\in A}d_{ij}x_{ijk} + \sum_{(i,j)\in A}\bar{d}_{j0}x_{ijK} \leq L + Mz_{K},\forall K=1,...,|N|\label{2rm1:3}\\
    & z_{k}\geq z_{k-1},\forall k=2,...,|N|\label{2rm1:4}\\
    & y_{j}\geq \sum_{i\in N^+}x_{ijk}+z_{k}-1,\forall j\in N, k=1,...,|N|\label{2rm1:5}\\
    & x_{ijk}\in \{0,1\}, \forall i,j\in N^+, k=1,...,|N|\label{2rm1:6}\\
    &  y_{i}\in\{0,1\},\forall i\in N\label{2rm1:7}\\
    &z_{k}\in\{0,1\},\forall k=1,...,|N|\label{2rm1:8}
    \end{alignat}
  \end{subequations}
where the objective function \eqref{eq:re1} is to minimize the loss in the collected score as a result of the recourse action. Constraint \eqref{2rm1:1} identifies the first arc in the path. Constraint \eqref{2rm1:2} identifies the order of the other arcs in the path. Constraint \eqref{2rm1:3} determines the nodes of the first-stage path $\mathbf{x}$ that can and cannot be reached based on $\mathbf{d}$, where $M$ is a sufficiently large number. Constraint \eqref{2rm1:4} makes sure that all nodes in the path after the first node that cannot be reached, cannot be reached either. Constraint \eqref{2rm1:5} identifies the nodes in the first-stage path that cannot be reached, based on their indexes. A detailed explanation of the model is given in \cite{evers2014two}.

With the first-stage path $\mathbf{x}$ and the weight realizations $\mathbf{d}$, the objective value of the sequential recourse problem can be calculated not only by solving the \hyperref[eq:rs]{Recourse-Sequential} model, but also by an efficient Forward Checking algorithm which is described in Algorithm \ref{algorithm:1}. With Algorithm \ref{algorithm:1}, the objective value of the \hyperref[eq:rs]{Recourse-Sequential} model can be obtained in time $\mathcal{O}(n)$ where $n$ is the number of nodes in the first-stage path.

\renewcommand{\algorithmicrequire}{\textbf{Input:}} 
\renewcommand{\algorithmicensure}{\textbf{Output:}} 
\begin{algorithm}[h]
\caption{Forward Checking algorithm for \hyperref[eq:rs]{Recourse-Sequential} model}
\begin{algorithmic}[1]
\Require
The first-stage path $\mathbf{x}$ and the weight realizations $\mathbf{d}$
\Ensure
The objective value of $RS(\mathbf{x},\mathbf{d})$
\State Denote the first-stage path $\mathbf{x}$ as a node sequence $(v_0,v_1,...,v_n,v_0)$ where $v_0$ is the depot and $v_k$ is the $k$th node.
\State $violation=False$
\For{$k=1$ to $n$}
\State $Length=dist(v_0,v_1,...,v_k)+\bar{d}_{v_kv_0}$ where $dist(v_0,v_1,...,v_k)=d_{v_0v_1}+...+d_{v_{k-1}v_k}$.
\If{$Length>L$}
\State $violation=True$
\State break for loop
\EndIf
\EndFor
\If{$violation==True$}
\State $Loss=s_{v_k}+s_{v_{k+1}}+...+s_{v_n}$
\Else
\State $Loss=0$
\EndIf
\State $RS(\mathbf{x},\mathbf{d})=-Loss$
\end{algorithmic}
\label{algorithm:1} 
\end{algorithm}

\subsection{Recourse model with Concurrent realization}

We now introduce a recourse model with concurrently realized weights. In this model, all the stochastic weights of the first-stage path are realized concurrently at the beginning of the path execution and the stochastic weights of all other arcs remain unrevealed. Then the recourse action is to find a node $i$ in the first-stage path and enforce a direct return to the depot from node $i$ so that the length of the subpath from the depot to node $i$ plus the expected return length from node $i$ to the depot is within the length limit $L$ and the loss in the collected score is minimized. Here, we also assume that a certain amount of extra length budget is available to cover the maximum deviation from the expected length on any of the arcs to the depot. This safety stock is not part of the length limit $L$ used in the model.

We introduce a new binary variable $y_{ij}$; $y_{ij}=1$ if arc $(i,j)$ is in the first-stage path but is cancelled by the recourse action, $y_{ij}=0$ if arc $(i,j)$ is in the first-stage path and is not cancelled by the recourse action, or arc $(i,j)$ is not in the first-stage path. Then the recourse problem of the two-stage OPSW with concurrently realized weights can be formulated as follows:

\begin{subequations}\label{eq:rc}
(Recourse-Concurrent):
    \begin{alignat}{2}
    RC(\mathbf{x},\mathbf{d})=\max \qquad & -\sum_{j\in N}s_j\sum_{i\in N^+\setminus\{j\}}y_{ij}\label{2rm2:1}\\
    \text{subject to }\qquad &y_{ij}\leq x_{ij},\forall (i,j)\in A \label{2rm2:2}\\
    &\sum_{i\in N^+\setminus\{j\}}y_{ij}\leq \sum_{k\in N^+\setminus\{j\}}y_{jk},\forall j\in N\label{2rm2:4}\\
    & \sum_{(i,j)\in A}d_{ij}x_{ij}-\sum_{(i,j)\in A}d_{ij}y_{ij}+\sum_{j\in N}\left(\sum_{k\in N^+\setminus\{j\}}y_{jk}-\sum_{i\in N^+\setminus\{j\}}y_{ij}\right)\bar{d}_{j0}\leq L\label{2rm2:5}\\
    &y_{ij}\in\{0,1\},\forall (i,j)\in A\label{2rm2:6}
    \end{alignat}
  \end{subequations}
where the objective function \eqref{2rm2:1} is to minimize the loss in the collected score as a result of the recourse action. Constraint \eqref{2rm2:2} ensures that the cancelled arcs are from the first-stage path. Constraint \eqref{2rm2:4} ensures that the cancelled arcs compose a subpath of the first-stage path starting at a vertex of the first-stage path and ending at the depot. Constraint \eqref{2rm2:5} ensures that the modified path after the recourse action is within the length limit.

With the first-stage path $\mathbf{x}$ and the weight realizations $\mathbf{d}$, the objective value of the concurrent recourse problem can be calculated not only by solving the \hyperref[eq:rc]{Recourse-Concurrent} model, but also by an efficient Backward Checking algorithm which is described in Algorithm \ref{algorithm:2}. With Algorithm \ref{algorithm:2}, the objective value of the \hyperref[eq:rc]{Recourse-Concurrent} model can be obtained in time $\mathcal{O}(n)$ where $n$ is the number of nodes in the first-stage path.

\renewcommand{\algorithmicrequire}{\textbf{Input:}} 
\renewcommand{\algorithmicensure}{\textbf{Output:}} 
\begin{algorithm}[h]
\caption{Backward Checking algorithm for \hyperref[eq:rc]{Recourse-Concurrent} model}
\begin{algorithmic}[1]
\Require
The first-stage path $\mathbf{x}$ and the weight realizations $\mathbf{d}$
\Ensure
The objective value of $RC(\mathbf{x},\mathbf{d})$
\State Denote the first-stage path $\mathbf{x}$ as a node sequence $(v_0,v_1,...,v_n,v_0)$ where $v_0$ is the depot and $v_k$ is the $k$th node.
\State $violation=True$
\For{$k=n$ to $1$}
\State $Length=dist(v_0,v_1,...,v_k)+\bar{d}_{v_kv_0}$ where $dist(v_0,v_1,...,v_k)=d_{v_0v_1}+...+d_{v_{k-1}v_k}$.
\If{$Length\leq L$}
\State $violation=False$
\State break for loop
\EndIf
\EndFor
\If{$k\neq n$ and $violation==False$}
\State $Loss=s_{v_{k+1}}+...+s_{v_n}$
\ElsIf{$k\neq n$ and $violation==True$}
\State $Loss=s_{v_1}+...+s_{v_n}$
\Else
\State $Loss=0$
\EndIf
\State $RC(\mathbf{x},\mathbf{d})=-Loss$
\end{algorithmic}
\label{algorithm:2} 
\end{algorithm}
%

\section{Two-stage robust optimization for OPSW}
\label{section:2ro}

In this section, we apply the two-stage RO methodology to model the two-stage OPSW. In the two-stage OPSW, the first-stage ``here and now'' decisions are the binary decision variables $x_{ij}$ described in Section \ref{section:op}. The second-stage ``wait and see'' decisions are the binary decision variables $y_i$ and $z_k$ in the \hyperref[eq:rs]{Recourse-Sequential} model or the binary decision variables $y_{ij}$ in the \hyperref[eq:rc]{Recourse-Concurrent} model.

In the two-stage RO for OPSW, an uncertainty set needs to be defined for the stochastic weights. We consider the \emph{box uncertainty set} which is defined by the $\infty$-norm of the uncertain vector. The reasons that we choose the \emph{box uncertainty set} for the two-stage RO for OPSW are as follows:
\begin{enumerate}
\item It is simple compared with the polyhedral uncertainty set and the ellipsoidal uncertainty set which is defined by 1-norm and 2-norm respectively (\cite{bertsimas2004robust,ben2009robust}), and the derived robust counterpart has the same computational complexity as the original model.   
\item With the \emph{box uncertainty set}, we can draw some interesting conclusions in the following subsections which describe the equivalence between the two-stage robust models and their corresponding static robust models.
\end{enumerate}

Without loss of generality, the \emph{box uncertainty set} $\mathcal{U}$ for the stochastic weights is defined as follows:
\begin{equation}
\label{uncertainty:set1}
\mathcal{U}=\left\{\mathbf{d}\in\mathbb{R}^{M}:d_{ij}=\bar{d}_{ij}+\zeta_{ij}\hat{d}_{ij},\forall i,j\in N^+,\zeta\in \mathcal{Z}\right\}
\end{equation}
where $\mathbf{d}$ is a $M$ dimensional vector with $M=|N^+|\times|N^+|$, $\zeta\in\mathbb{R}^M$ is the vector of primitive uncertainties, and $\mathcal{Z}$ is a convex set which is defined as follows:
\begin{equation}
\label{uncertainty:set2}
\mathcal{Z}=\left\{\zeta\in\mathbb{R}^{M}: \left \|\zeta  \right \|_{\infty}\leq \Theta\right\}
\end{equation}
where $\Theta\in [0,1]$ is the parameter controlling the size of $\mathcal{Z}$.

Next, we introduce two two-stage RO models for OPSW based on the \hyperref[eq:rs]{Recourse-Sequential} model and the \hyperref[eq:rc]{Recourse-Concurrent} model respectively.

\subsection{Two-stage robust model for OPSW with \hyperref[eq:rs]{Recourse-Sequential} model}

Based on the \hyperref[eq:rs]{Recourse-Sequential} model and the two-stage RO paradigm, we introduce the following two-stage RO model for OPSW:

\begin{subequations}\label{2ro:1}
(Two-stage-Sequential):
    \begin{alignat}{2}
    \text{maximize}\qquad & \sum_{i\in N}s_i\sum_{j\in N^+\setminus\{i\}}x_{ij} + \min_{\mathbf{d}\in\mathcal{U}}RS(\mathbf{x},\mathbf{d})\\
    \text{subject to }\qquad &\sum_{(i,j)\in A}(\bar{d}_{ij}-\hat{d}_{ij}) x_{ij} \leq L\label{2ro:2}\\
    &\eqref{opc:2}-\eqref{opc:6}
    \end{alignat}
  \end{subequations}
where $RS(\mathbf{x},\mathbf{d})$ is the \hyperref[eq:rs]{Recourse-Sequential} model and $\mathcal{U}$ is the box uncertainty set. Constraint \eqref{2ro:2} is the length limit on the first-stage path. Without constraint \eqref{2ro:2}, the first-stage path can be arbitrarily long providing there exists unvisited nodes, and these nodes can be included in the first-stage path even some nodes in any particular case cannot be reached. By adding constraint \eqref{2ro:2}, we limit the length of the first-stage path in the most optimistic situation, i.e. all arc weights equal to their minimum values. With this constraint, the size of the solution space can be reduced while the problem optimality is maintained.

The two-stage robust model for OPSW introduced above is an 0-1 integer programming problem with 0-1 integer recourse. Next, we present its corresponding static robust model in which the second-stage ``wait and see'' decisions become ``here and now''. The corresponding static robust model of the \hyperref[2ro:1]{Two-stage-Sequential} model is formulated as follows:

\begin{subequations}\label{sro:1}
(Static-Sequential):
    \begin{alignat}{2}
    \text{maximize}\qquad & \sum_{i\in N}s_i\sum_{j\in N^+\setminus\{i\}}x_{ij} -\sum_{i\in N}s_i y_i\\
    \text{subject to }\qquad &\eqref{opc:2}-\eqref{opc:6},\eqref{2ro:2}\\
    & \eqref{2rm1:1}-\eqref{2rm1:2},\eqref{2rm1:4}-\eqref{2rm1:8}\\
    & \sum_{k=1}^{K}\sum_{(i,j)\in A}d_{ij}x_{ijk} + \sum_{(i,j)\in A}\bar{d}_{j0}x_{ijK} \leq L + Mz_{K},\forall K=1,...,|N|,\mathbf{d}\in\mathcal{U}\label{sro:2}
    \end{alignat}
  \end{subequations}
  
In the above static robust model, the second-stage decision variables $y_i$ and $z_k$ are ``here and now'' and do not depend on the realizations of uncertain $\mathbf{d}$. Both first-stage decisions $x_{ij}$ and second-stage decisions $y_i$ and $z_k$ are selected before the uncertain $\mathbf{d}$ is known. An optimal static robust solution to \hyperref[sro:1]{Static-Sequential} can be computed efficiently with the box uncertainty set $\mathcal{U}$. What interests us is the relationship between the two-stage robust model \hyperref[2ro:1]{Two-stage-Sequential} and its corresponding static robust model \hyperref[sro:1]{Static-Sequential}. 

In the following, we establish a theorem which describes the equivalence of the \hyperref[2ro:1]{Two-stage-Sequential} model and the \hyperref[sro:1]{Static-Sequential} model.
  
\begin{thm}
\label{thm:1}
The two-stage robust model \hyperref[2ro:1]{Two-stage-Sequential} and its corresponding static robust model \hyperref[sro:1]{Static-Sequential} are equivalent.
\end{thm}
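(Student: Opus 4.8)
The plan is to prove equivalence by collapsing the inner $\min$--$\max$ in the \hyperref[2ro:1]{Two-stage-Sequential} model onto a single worst-case weight vector, and then merging the resulting two nested maximizations into one. The engine of the argument is a monotonicity property of the box uncertainty set $\mathcal{U}$: every occurrence of $\mathbf{d}$ in the \hyperref[eq:rs]{Recourse-Sequential} model --- both in the recourse value $RS(\mathbf{x},\mathbf{d})$ and in the robustified length constraint \eqref{sro:2} --- enters only through nonnegative combinations of the type $\sum_{k=1}^{K}\sum_{(i,j)\in A}d_{ij}x_{ijk}$, which are componentwise nondecreasing in $\mathbf{d}$. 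Hence over the box $\mathcal{U}$ the ``worst'' weight vector is the upper corner $\mathbf{d}^{\ast}$ with $d^{\ast}_{ij}=\bar d_{ij}+\Theta\hat d_{ij}$, which lies in $\mathcal{U}$ since it corresponds to $\zeta_{ij}=\Theta$ and $\|\zeta\|_{\infty}=\Theta\le\Theta$.

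First I would make precise that, for a fixed first-stage path $\mathbf{x}$, $RS(\mathbf{x},\mathbf{d})$ is nonincreasing in each coordinate $d_{ij}$. The cleanest route is Algorithm~\ref{algorithm:1}: it characterizes $RS(\mathbf{x},\mathbf{d})=-(s_{v_k}+\cdots+s_{v_n})$, where $v_0,\dots,v_n,v_0$ is the node sequence encoded by $\mathbf{x}$ and $k$ is the smallest index with $dist(v_0,\dots,v_k)+\bar d_{v_kv_0}>L$ (and $RS=0$ if no such index exists). Since $dist(v_0,\dots,v_k)$ is nondecreasing in each $d_{ij}$ and the scores are nonnegative, raising any $d_{ij}$ can only move the first-violation index $k$ earlier (or leave it unchanged) and therefore only increase the loss; consequently $\min_{\mathbf{d}\in\mathcal{U}}RS(\mathbf{x},\mathbf{d})=RS(\mathbf{x},\mathbf{d}^{\ast})$, and this minimum is attained and finite because the second stage is always feasible (e.g.\ taking all $z_k=1$) with value in $[-\sum_{i\in N}s_i,0]$. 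An alternative is to argue directly on the MILP \eqref{eq:rs}: increasing any $d_{ij}$ only tightens \eqref{2rm1:3}, shrinking the feasible set of the inner maximization.

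Next I would collapse the two stages. By the previous step, the objective of \hyperref[2ro:1]{Two-stage-Sequential} at any first-stage-feasible $\mathbf{x}$ equals $\sum_{i\in N}s_i\sum_{j}x_{ij}+RS(\mathbf{x},\mathbf{d}^{\ast})$, and $RS(\mathbf{x},\mathbf{d}^{\ast})$ is itself the maximum of $-\sum_{i\in N}s_iy_i$ over $(x_{ijk},\mathbf{y},\mathbf{z})$ satisfying \eqref{2rm1:1}--\eqref{2rm1:8} with $\mathbf{d}=\mathbf{d}^{\ast}$. Hence the outer $\max_{\mathbf{x}}$ and this inner $\max$ merge into a single joint maximization of $\sum_{i\in N}s_i\sum_{j}x_{ij}-\sum_{i\in N}s_iy_i$ over all variables, subject to the first-stage constraints \eqref{opc:2}--\eqref{opc:6} and \eqref{2ro:2} together with \eqref{2rm1:1}--\eqref{2rm1:8} evaluated at $\mathbf{d}^{\ast}$. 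Finally, applying the same monotonicity to the left-hand side of \eqref{2rm1:3}, the robust family \eqref{sro:2} (i.e.\ \eqref{2rm1:3} imposed for all $\mathbf{d}\in\mathcal{U}$) holds if and only if the single constraint at $\mathbf{d}=\mathbf{d}^{\ast}$ holds. Therefore the merged problem is precisely \hyperref[sro:1]{Static-Sequential}: the two models have the same feasible set in $(\mathbf{x},x_{ijk},\mathbf{y},\mathbf{z})$ and identical objective, so they are equivalent and an optimal solution of one yields an optimal solution of the other.

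The main obstacle --- the only step that is more than bookkeeping --- is rigorously pinning down the worst-case-corner claim for $RS(\mathbf{x},\mathbf{d})$, since $RS$ is defined through an integer program rather than a closed form. I expect to lean on the Forward Checking characterization of Algorithm~\ref{algorithm:1} for a clean monotonicity argument, and to record carefully that recourse feasibility is never an issue so that all the maxima and the min over $\mathcal{U}$ are attained; the remaining manipulations (exchanging nested maxima, reducing a robust constraint to its active realization) are then routine.
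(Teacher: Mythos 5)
Your proof is correct, but it takes a genuinely different route from the paper's. You exploit the box structure of $\mathcal{U}$ head-on: since every appearance of $\mathbf{d}$ in the \hyperref[eq:rs]{Recourse-Sequential} model is through nonnegative combinations, the feasible set of the inner maximization only shrinks as $\mathbf{d}$ increases componentwise, so $RS(\mathbf{x},\cdot)$ is nonincreasing and the inner $\min$ is attained at the upper corner $\mathbf{d}^{\ast}=\bar{\mathbf{d}}+\Theta\hat{\mathbf{d}}\in\mathcal{U}$; the same monotonicity collapses the robust family \eqref{sro:2} to the single constraint at $\mathbf{d}^{\ast}$, and merging the nested maxima then shows the two models are literally the same MILP. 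The paper instead argues by contradiction: it takes an optimal two-stage solution, supposes some realization $\mathbf{d}'$ violates \eqref{sro:2} with $z^{\ast}_{K'}=0$, and deduces that the recourse under $\mathbf{d}'$ must mark the $K'$th node as unreachable, so the worst-case recourse value is strictly worse than the one claimed optimal --- a contradiction. The paper's argument never identifies a worst-case scenario and therefore, as its remark notes, extends verbatim to an arbitrary uncertainty set $\mathcal{U}$; your argument buys a more explicit and arguably more transparent identification of the two feasible sets (both directions of the equivalence fall out at once), but it is intrinsically tied to the existence of a componentwise-maximal element of $\mathcal{U}$ and so does not recover that generality. Both the Algorithm~\ref{algorithm:1} characterization and your alternative direct argument on \eqref{2rm1:3} suffice for the monotonicity step, and your observation that recourse feasibility is never an issue (set all $z_k=1$) correctly disposes of attainment.
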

\begin{proof}
It is clear that the optimal solution of the static robust model \hyperref[sro:1]{Static-Sequential} is a feasible solution of the two-stage robust model \hyperref[2ro:1]{Two-stage-Sequential}. All we need to show is that the optimal solution of the two-stage robust model \hyperref[2ro:1]{Two-stage-Sequential} is a feasible solution of the static robust model \hyperref[sro:1]{Static-Sequential}. 

We prove by apagoge. Denote $(x_{ij}^*,x_{ijk}^*,y_i^*,z_k^*)$ as the optimal solution of the two-stage robust model \hyperref[2ro:1]{Two-stage-Sequential}. Suppose the optimal solution is infeasible for the static robust model \hyperref[sro:1]{Static-Sequential}, which means 
\begin{equation}
\exists \mathbf{d}'\in\mathcal{U}\text{ and }K', \sum_{k=1}^{K'}\sum_{(i,j)\in A}d'_{ij}x_{ijk}^* + \sum_{(i,j)\in A}\bar{d}_{j0}x_{ijK'}^* > L + Mz_{K'}^*\label{1}
\end{equation}

Because $M$ is a sufficiently large number, so the above condition is only satisfied by $z_{K'}^*=0$. This means that the $K'$th node in the first-stage path of the two-stage robust model \hyperref[2ro:1]{Two-stage-Sequential} is reachable, but this node is unreachable under the context of the static robust model \hyperref[sro:1]{Static-Sequential}.

Now we consider the second-stage problem $RS(\mathbf{x}^*,\mathbf{d}')$ where $\mathbf{x}^*$ is the first-stage optimal solution, and denote the optimal solution as $(y'_i,z'_k)$. Then for $K'$,
\begin{equation}
\sum_{k=1}^{K'}\sum_{(i,j)\in A}d'_{ij}x_{ijk}^* + \sum_{(i,j)\in A}\bar{d}_{j0}x_{ijK'}^* \leq L + Mz'_{K'}\label{2}
\end{equation}

Comparing constraints \eqref{1} and \eqref{2}, it is clear that $z'_{K'}$ must be 1. This means the $K'$th node in the first-stage path is unreachable with $\mathbf{d}'$. Denote the $K'$th node in the first-stage path as node $j$, then the second-stage optimal value $RS(\mathbf{x}^*,\mathbf{d}')\leq -\sum_{i\in N}s_i y_i^*-s_{j}$. Because we are optimizing $\min_{\mathbf{d}\in\mathcal{U}}RS(\mathbf{x}^*,\mathbf{d})$, $(y_i^*,z_k^*)$ is not the optimal second-stage solution, so this is a contradiction. Thus the hypothesis cannot be established, which means $(x_{ij}^*,x_{ijk}^*,y_i^*,z_k^*)$ is feasible for the static robust model \hyperref[sro:1]{Static-Sequential}.

We conclude that the optimal solution of the two-stage robust model \hyperref[2ro:1]{Two-stage-Sequential} is a feasible solution of the static robust model \hyperref[sro:1]{Static-Sequential}. Because the two models have the same objective function value with the same solutions, then the optimal solution of the two-stage robust model \hyperref[2ro:1]{Two-stage-Sequential} is also the optimal solution of the static robust model \hyperref[sro:1]{Static-Sequential}, this implies the two models are equivalent.
\end{proof}

\begin{rem}
The proof of Theorem \ref{thm:1} does not need the support of the \emph{box uncertainty set}. We can still draw this conclusion even if the uncertainty set $\mathcal{U}$ is an arbitrary uncertainty set.
\end{rem}

Based on Theorem \ref{thm:1}, the two-stage robust model \hyperref[2ro:1]{Two-stage-Sequential} can be solved to optimality by solving its corresponding static robust model \hyperref[sro:1]{Static-Sequential}. Comparing the static robust model \hyperref[sro:1]{Static-Sequential} with the original deterministic OP model \hyperref[eq:op1]{DOP}, many new integer variables are added which makes the static robust model \hyperref[sro:1]{Static-Sequential} computationally expensive. \cite{evers2014two} proved that the relaxation model with $0\leq x_{ijk}\leq 1$ and $0\leq y_i\leq 1$ of the second-stage problem $RS(\mathbf{x},\mathbf{d})$ is equivalent to the original $RS(\mathbf{x},\mathbf{d})$, and the resulting relaxation model provides a substantial decrease in the computation time. This conclusion can be easily applied to the static robust model \hyperref[sro:1]{Static-Sequential} which leads to the following proposition:

\begin{prop}
The relaxation model with $0\leq x_{ijk}\leq 1$ and $0\leq y_i\leq 1$ of the static robust model \hyperref[sro:1]{Static-Sequential} is equivalent to the original static robust model \hyperref[sro:1]{Static-Sequential}.
\end{prop}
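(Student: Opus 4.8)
The plan is to reduce the \hyperref[sro:1]{Static-Sequential} model to a deterministic problem whose recourse part is literally a copy of $RS(\mathbf{x},\mathbf{d})$, and then to invoke the LP-relaxation result of \cite{evers2014two} that is quoted just before the proposition.

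First I would use the box structure of $\mathcal{U}$ to collapse the semi-infinite constraint \eqref{sro:2} into a single constraint. For each fixed $K$, the left-hand side of \eqref{sro:2} is nondecreasing in every component $d_{ij}$ (its coefficient $\sum_{k=1}^{K}x_{ijk}$ is nonnegative, and the term $\sum_{(i,j)\in A}\bar d_{j0}x_{ijK}$ does not involve $\mathbf{d}$), so the worst case over $\mathcal{U}$ is attained at the componentwise-maximal realization $\mathbf{d}^{\max}$ with $d^{\max}_{ij}=\bar d_{ij}+\Theta\hat d_{ij}$, and this same $\mathbf{d}^{\max}$ is simultaneously worst for every $K$. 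Hence ``$\forall K,\ \forall\mathbf{d}\in\mathcal{U}$'' in \eqref{sro:2} is equivalent to ``$\forall K$ at $\mathbf{d}=\mathbf{d}^{\max}$'', and \hyperref[sro:1]{Static-Sequential} becomes the deterministic problem obtained by that substitution.

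Next I would decompose this deterministic problem as $\max_{\mathbf{x}}\big[\sum_{i\in N}s_i\sum_{j\in N^+\setminus\{i\}}x_{ij}+Q(\mathbf{x})\big]$, where $\mathbf{x}$ ranges over binary first-stage paths satisfying \eqref{opc:2}-\eqref{opc:6} and \eqref{2ro:2}, and $Q(\mathbf{x})$ is the inner maximum of $-\sum_{i\in N}s_iy_i$ over $(x_{ijk},y_i,z_k)$ subject to \eqref{2rm1:1}-\eqref{2rm1:2}, \eqref{2rm1:4}-\eqref{2rm1:8} together with \eqref{sro:2} evaluated at $\mathbf{d}^{\max}$. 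By construction this inner problem is exactly the \hyperref[eq:rs]{Recourse-Sequential} model $RS(\mathbf{x},\mathbf{d}^{\max})$. Since \cite{evers2014two} showed that the relaxation of $RS(\mathbf{x},\mathbf{d})$ with $0\le x_{ijk}\le1$ and $0\le y_i\le1$ has the same optimal value as $RS(\mathbf{x},\mathbf{d})$ for every fixed $\mathbf{x}$ and $\mathbf{d}$, applying this with $\mathbf{d}=\mathbf{d}^{\max}$ shows $Q(\mathbf{x})$ is unchanged by relaxing $x_{ijk}$ and $y_i$, for every feasible first-stage $\mathbf{x}$. Taking the outer maximum over the (still binary, and untouched) first-stage variables then gives that the relaxed and the original \hyperref[sro:1]{Static-Sequential} models have the same optimal value; since the relaxed feasible region contains the original one, the optimal solutions coincide, and likewise the \hyperref[2ro:1]{Two-stage-Sequential} model can be solved through the relaxed static model.

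I expect the only delicate point to be the first step: one must verify that the box uncertainty set makes one realization $\mathbf{d}^{\max}$ worst for all $K$ at once, which relies on nonnegativity of the arc weights (equivalently $\bar d_{ij}\ge\hat d_{ij}$, so all $d_{ij}\ge 0$) and on the order variables being nonnegative. As an alternative to treating \cite{evers2014two} as a black box, one could argue directly: given an optimal relaxed solution $(x_{ij}^*,x_{ijk}^*,y_i^*,z_k^*)$ with $x_{ij}^*$ and $z_k^*$ binary, the lower-bound constraints \eqref{2rm1:1}-\eqref{2rm1:2} (with the relaxation bound $x_{ijk}\le1$) force $x_{ijk}^*=1$ exactly on the arcs of the fixed path; replacing the remaining $x_{ijk}^*$ by the integral position assignment $\hat x_{ijk}$ yields $\hat x\le x^*$ componentwise, so \eqref{sro:2} and \eqref{2rm1:4} remain satisfied, and then $\hat y_j:=\max\{0,\max_k(\sum_{i}\hat x_{ijk}+z_k^*-1)\}$ is automatically $\{0,1\}$-valued, satisfies \eqref{2rm1:5}, and obeys $\hat y_j\le y_j^*$, so (as $s_j\ge 0$) the objective does not decrease. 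Either route establishes the proposition.
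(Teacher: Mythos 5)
Your proposal is correct and follows essentially the same route as the paper, whose proof is simply the remark that the argument of Theorem 1 in \cite{evers2014two} carries over; you make this precise by collapsing the box uncertainty to the worst-case realization $\mathbf{d}^{\max}$ and decomposing the static model so that the inner problem is exactly $RS(\mathbf{x},\mathbf{d}^{\max})$, to which that theorem applies. Your write-up is in fact more detailed than the paper's one-line proof, and the self-contained rounding argument you sketch at the end is a reasonable substitute for treating \cite{evers2014two} as a black box.
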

\begin{proof}
This conclusion can be drawn by following the proof way of Theorem 1 in \cite{evers2014two}.
\end{proof}

\subsection{Two-stage robust model for OPSW with \hyperref[eq:rc]{Recourse-Concurrent} model}

Based on the \hyperref[eq:rc]{Recourse-Concurrent} model and the two-stage RO paradigm, we introduce the following two-stage RO model for OPSW:

\begin{subequations}\label{2roc:1}
(Two-stage-Concurrent):
    \begin{alignat}{2}
    \text{maximize}\qquad & \sum_{i\in N}s_i\sum_{j\in N^+\setminus\{i\}}x_{ij} + \min_{\mathbf{d}\in\mathcal{U}}RC(\mathbf{x},\mathbf{d})\\
    \text{subject to }\qquad &\eqref{opc:2}-\eqref{opc:6},\eqref{2ro:2}
    \end{alignat}
  \end{subequations}
where $RC(\mathbf{x},\mathbf{d})$ is the \hyperref[eq:rc]{Recourse-Concurrent} model and $\mathcal{U}$ is the box uncertainty set. 

We also consider the corresponding static robust model of \hyperref[2roc:1]{Two-stage-Concurrent} instead of solving the two-stage robust model directly, the corresponding static robust model of \hyperref[2roc:1]{Two-stage-Concurrent} is formulated as follows:

\begin{subequations}\label{sro:2}
(Static-Concurrent):
    \begin{alignat}{2}
    \text{maximize}\qquad & \sum_{i\in N}s_i\sum_{j\in N^+\setminus\{i\}}x_{ij} -\sum_{j\in N}s_j\sum_{i\in N^+\setminus\{j\}}y_{ij}\\
    \text{subject to }\qquad &\eqref{opc:2}-\eqref{opc:6},\eqref{2ro:2}\\
    & \eqref{2rm2:2}-\eqref{2rm2:4},\eqref{2rm2:6}\\
    & \sum_{(i,j)\in A}d_{ij}x_{ij}-\sum_{(i,j)\in A}d_{ij}y_{ij}+\sum_{j\in N}\left(\sum_{k\in N^+\setminus\{j\}}y_{jk}-\sum_{i\in N^+\setminus\{j\}}y_{ij}\right)\bar{d}_{j0}\leq L,\forall \mathbf{d}\in\mathcal{U}\label{sroc:2}
    \end{alignat}
  \end{subequations}

We can readily see that the optimal solution of the static robust model \hyperref[sro:2]{Static-Concurrent} is feasible to the two-stage robust model \hyperref[2roc:1]{Two-stage-Concurrent}. With the help of the box uncertainty set, the following theorem can be established which shows that \hyperref[2roc:1]{Two-stage-Concurrent} and \hyperref[sro:2]{Static-Concurrent} are equivalent.

\begin{thm}
\label{thm:3}
The two-stage robust model \hyperref[2roc:1]{Two-stage-Concurrent} and its corresponding static robust model \hyperref[sro:2]{Static-Concurrent} are equivalent.
\end{thm}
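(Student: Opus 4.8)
The plan is to run the same contradiction (apagoge) scheme used for Theorem~\ref{thm:1}, with the box structure of $\mathcal{U}$ entering through one monotonicity observation. First I would record the easy inclusion already noted in the text: any optimal solution of \hyperref[sro:2]{Static-Concurrent} satisfies \eqref{sroc:2} for every $\mathbf{d}\in\mathcal{U}$, hence in particular for the realization attaining $\min_{\mathbf{d}\in\mathcal{U}}RC(\mathbf{x},\mathbf{d})$, so it is feasible for \hyperref[2roc:1]{Two-stage-Concurrent} with the same objective value; this gives $\mathrm{opt}(\text{\hyperref[sro:2]{Static-Concurrent}})\le\mathrm{opt}(\text{\hyperref[2roc:1]{Two-stage-Concurrent}})$. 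The real work is the reverse direction.

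The key structural remark is that in both constraint \eqref{2rm2:5} (inside $RC(\mathbf{x},\mathbf{d})$) and constraint \eqref{sroc:2}, the coefficient of each weight $d_{ij}$ is $x_{ij}-y_{ij}$, which is nonnegative because \eqref{2rm2:2} and \eqref{2rm2:6} force $0\le y_{ij}\le x_{ij}$. Hence, over $\mathbf{d}\in\mathcal{U}$, the left-hand side of each of these constraints is maximized at the componentwise-largest realization $\mathbf{d}^{\max}$ given by $d^{\max}_{ij}=\bar d_{ij}+\Theta\hat d_{ij}$, which lies in $\mathcal{U}$. Two consequences follow: (i) the semi-infinite constraint \eqref{sroc:2} ``$\forall\,\mathbf{d}\in\mathcal{U}$'' is equivalent to the single constraint evaluated at $\mathbf{d}=\mathbf{d}^{\max}$; and (ii) increasing $\mathbf{d}$ on the first-stage path arcs only shrinks the feasible region of the recourse problem (weights on arcs outside the path are immaterial, since $x_{ij}=0\Rightarrow y_{ij}=0$), so the recourse value is nonincreasing in $\mathbf{d}$ and $\min_{\mathbf{d}\in\mathcal{U}}RC(\mathbf{x},\mathbf{d})=RC(\mathbf{x},\mathbf{d}^{\max})$.

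With this in hand I would argue by apagoge exactly as in Theorem~\ref{thm:1}. Let $(x^*_{ij},y^*_{ij})$ be an optimal solution of \hyperref[2roc:1]{Two-stage-Concurrent}, taking $y^*$ to be an optimal recourse for $\mathbf{d}^{\max}$, so that $y^*$ satisfies \eqref{2rm2:2}--\eqref{2rm2:4},\eqref{2rm2:6} together with \eqref{2rm2:5} at $\mathbf{d}^{\max}$, and $\sum_{j\in N}s_j\sum_{i}y^*_{ij}=-RC(\mathbf{x}^*,\mathbf{d}^{\max})$. If this solution were infeasible for \hyperref[sro:2]{Static-Concurrent}, the violation could only be in \eqref{sroc:2}, i.e.\ there would exist $\mathbf{d}'\in\mathcal{U}$ making its left-hand side exceed $L$; but by the coefficient-sign remark, replacing $\mathbf{d}'$ by $\mathbf{d}^{\max}$ can only increase that left-hand side, so \eqref{2rm2:5} would fail for $y^*$ at $\mathbf{d}^{\max}$ — contradicting feasibility of $y^*$ in $RC(\mathbf{x}^*,\mathbf{d}^{\max})$. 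Hence $(x^*_{ij},y^*_{ij})$ is feasible for \hyperref[sro:2]{Static-Concurrent}, and its objective there equals $\sum_{i\in N}s_i\sum_{j}x^*_{ij}+RC(\mathbf{x}^*,\mathbf{d}^{\max})=\sum_{i\in N}s_i\sum_{j}x^*_{ij}+\min_{\mathbf{d}\in\mathcal{U}}RC(\mathbf{x}^*,\mathbf{d})$, the optimal value of \hyperref[2roc:1]{Two-stage-Concurrent}. Combined with the easy direction, the two models have equal optimal value and share optimal solutions, i.e.\ they are equivalent.

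The only genuine obstacle I anticipate is making monotonicity statement~(ii) rigorous — that the optimal value of the recourse problem is nonincreasing in $\mathbf{d}$ over $\mathcal{U}$, so that its worst case is the single scenario $\mathbf{d}^{\max}$. This can be handled either by the feasible-region-shrinking argument above or, very concretely, via Algorithm~\ref{algorithm:2}: raising any path-arc weight $d_{v_{k-1}v_k}$ weakly increases $dist(v_0,\dots,v_{k'})$ for all $k'\ge k$, hence weakly moves the break index $k$ backward and weakly increases $Loss$. Everything else parallels the proof of Theorem~\ref{thm:1} and is routine.
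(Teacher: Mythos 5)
Your proof is correct, and it rests on the same engine as the paper's: the top corner $\mathbf{d}^{\max}=\bar{\mathbf{d}}+\Theta\hat{\mathbf{d}}$ of the box dominates every other realization because the coefficient of each $d_{ij}$ in \eqref{2rm2:5} and \eqref{sroc:2} is $x_{ij}-y_{ij}\ge 0$ by \eqref{2rm2:2}. The difference is in organization, and yours is the cleaner of the two. You isolate the monotonicity statement up front — the recourse feasible set shrinks as $\mathbf{d}$ increases, hence $\min_{\mathbf{d}\in\mathcal{U}}RC(\mathbf{x},\mathbf{d})=RC(\mathbf{x},\mathbf{d}^{\max})$ and the semi-infinite constraint \eqref{sroc:2} collapses to the single scenario $\mathbf{d}^{\max}$ — and then simply exhibit the optimal recourse at $\mathbf{d}^{\max}$ as a feasible static solution with the right objective value. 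The paper instead works with an unspecified worst-case realization $\mathbf{d}^*$ and an auxiliary set $\mathcal{Y}$ of recourse actions cancelling fewer arcs, and derives a contradiction by showing all of them fail at $\mathbf{d}^u$; the nonnegativity of $x_{ij}-y_{ij}$ that justifies passing from $\mathbf{d}'$ to $\mathbf{d}^u$ is used only implicitly there, whereas you state it explicitly. What your route buys is the stronger and reusable intermediate fact that the worst case is the single scenario $\mathbf{d}^{\max}$ (which also explains why the static counterpart is tractable); what the paper's route buys is a template that looks formally parallel to the proof of Theorem \ref{thm:1}, where no single dominating scenario is available. One small piece worth making explicit in a final write-up is that the recourse problem is never infeasible (cancelling the entire path gives left-hand side $0\le L$ by flow conservation), so $RC(\mathbf{x},\mathbf{d}^{\max})$ and its optimal $y^*$ are well defined; otherwise your argument is complete.
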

\begin{proof}
It is clear that the optimal solution of the static robust model \hyperref[sro:2]{Static-Concurrent} is a feasible solution of the two-stage robust model \hyperref[2roc:1]{Two-stage-Concurrent}. All we need to show is that the optimal solution of the two-stage robust model \hyperref[2roc:1]{Two-stage-Concurrent} is a feasible solution of the static robust model \hyperref[sro:2]{Static-Concurrent}. 

We prove by apagoge. Denote $(x_{ij}^*,y_{ij}^*)$ as the optimal solution of the two-stage robust model \hyperref[2roc:1]{Two-stage-Concurrent}. Suppose the optimal solution is infeasible for the static robust model \hyperref[sro:2]{Static-Concurrent}, which means 
\begin{equation}
\exists \mathbf{d}'\in\mathcal{U}, \sum_{(i,j)\in A}d'_{ij}x^*_{ij}-\sum_{(i,j)\in A}d'_{ij}y^*_{ij}+\sum_{j\in N}\left(\sum_{k\in N^+\setminus\{j\}}y^*_{jk}-\sum_{i\in N^+\setminus\{j\}}y^*_{ij}\right)\bar{d}_{j0}> L\label{thm3:eq1}
\end{equation}

Denote $\mathbf{d}^{u} = \bar{\mathbf{d}}+\Theta\hat{\mathbf{d}}$, according to the definition of the box uncertainty set $\mathcal{U}$, we know that $\mathbf{d}^{u}\in\mathcal{U}$ and $\mathbf{d}^{u}\succeq \mathbf{d}'$ where $\succeq$ is the element-wise inequality. Based on inequality \eqref{thm3:eq1}, it is clear that
\begin{equation}
\sum_{(i,j)\in A}d^{u}_{ij}x^*_{ij}-\sum_{(i,j)\in A}d^{u}_{ij}y^*_{ij}+\sum_{j\in N}\left(\sum_{k\in N^+\setminus\{j\}}y^*_{jk}-\sum_{i\in N^+\setminus\{j\}}y^*_{ij}\right)\bar{d}_{j0}> L\label{thm3:1}
\end{equation}

Denote $\mathbf{d}^*\in\mathcal{U}$ as the optimal value of $\mathbf{d}$ that achieves optimal solution  $(x_{ij}^*,y_{ij}^*)$ in the two-stage robust model \hyperref[2roc:1]{Two-stage-Concurrent}, then 
\begin{equation}
\sum_{(i,j)\in A}d^*_{ij}x^*_{ij}-\sum_{(i,j)\in A}d^*_{ij}y^*_{ij}+\sum_{j\in N}\left(\sum_{k\in N^+\setminus\{j\}}y^*_{jk}-\sum_{i\in N^+\setminus\{j\}}y^*_{ij}\right)\bar{d}_{j0}\leq L
\end{equation}
and
\begin{equation}
\sum_{(i,j)\in A}d^*_{ij}x^*_{ij}-\sum_{(i,j)\in A}d^*_{ij}y'_{ij}+\sum_{j\in N}\left(\sum_{k\in N^+\setminus\{j\}}y'_{jk}-\sum_{i\in N^+\setminus\{j\}}y'_{ij}\right)\bar{d}_{j0}> L,\forall \mathbf{y}'\in \mathcal{Y}\text{ and }\mathbf{y}'\neq \mathbf{y}^*\label{thm3:eq2}
\end{equation}
where $\mathcal{Y}=\left\{ \mathbf{y}':\begin{matrix}
y'_{ij}\leq y^*_{ij},\forall i,j\\ 
\sum_{i\in N^+\setminus\{j\}}y'_{ij}\leq \sum_{k\in N^+\setminus\{j\}}y'_{jk},\forall j\\ 
y'_{ij}\in \{0,1\},\forall i,j
\end{matrix} \right\}$ is the set which contains all recourse actions with less cancelled arcs comparing with $\mathbf{y}^*$.

Based on the fact that $\mathbf{d}^{u}\succeq \mathbf{d}^*$ and using inequality \eqref{thm3:eq2}, we have
\begin{equation}
\sum_{(i,j)\in A}d^{u}_{ij}x^*_{ij}-\sum_{(i,j)\in A}d^{u}_{ij}y'_{ij}+\sum_{j\in N}\left(\sum_{k\in N^+\setminus\{j\}}y'_{jk}-\sum_{i\in N^+\setminus\{j\}}y'_{ij}\right)\bar{d}_{j0}> L,\forall \mathbf{y}'\in \mathcal{Y}\text{ and }\mathbf{y}'\neq \mathbf{y}^*\label{thm3:2}
\end{equation}

Combining inequalities \eqref{thm3:1} and \eqref{thm3:2}, we can observe that: for the second-stage problem $RC(\mathbf{x}^*,\mathbf{d}^{u})$, the recourse action needs to cancel more arcs than $\mathbf{y}^*$ to satisfy the length constraint, which means $RC(\mathbf{x}^*,\mathbf{d}^{u})< -\sum_{j\in N}s_j\sum_{i\in N^+\setminus\{j\}}y^*_{ij}$. Because we are optimizing $\min_{\mathbf{d}\in\mathcal{U}}RC(\mathbf{x}^*,\mathbf{d})$, $\mathbf{y}^*$ is not the optimal second-stage solution, so this is a contradiction. Hence, the hypothesis cannot be established, which means $(x_{ij}^*,y_{ij}^*)$ is feasible for the static robust model \hyperref[sro:2]{Static-Concurrent}.

We conclude that the optimal solution of the two-stage robust model \hyperref[2roc:1]{Two-stage-Concurrent} is a feasible solution of the static robust model \hyperref[sro:2]{Static-Concurrent}. Because the two models have the same objective function value with the same solutions, then the optimal solution of the two-stage robust model \hyperref[2roc:1]{Two-stage-Concurrent} is also the optimal solution of the static robust model \hyperref[sro:2]{Static-Concurrent}, this implies the two models are equivalent.
\end{proof}

Based on Theorem \ref{thm:3}, the two-stage robust model \hyperref[2roc:1]{Two-stage-Concurrent} can be solved to optimality by solving its corresponding static robust model \hyperref[sro:2]{Static-Concurrent}. 

\subsection{The relationship between \hyperref[2ro:1]{Two-stage-Sequential} and \hyperref[2roc:1]{Two-stage-Concurrent}}

Until now, we have introduced two two-stage robust models \hyperref[2ro:1]{Two-stage-Sequential} and \hyperref[2roc:1]{Two-stage-Concurrent}, and also proved that these two models are equivalent to their corresponding static robust models respectively. In this subsection, we further investigate the relationships between \hyperref[2ro:1]{Two-stage-Sequential} and \hyperref[2roc:1]{Two-stage-Concurrent}.

First, we investigate the static models \hyperref[sro:2]{Static-Concurrent} and \hyperref[sro:1]{Static-Sequential}. Comparing static model \hyperref[sro:2]{Static-Concurrent} with static model \hyperref[sro:1]{Static-Sequential}, model \hyperref[sro:2]{Static-Concurrent} has less decision variables and less constraints and is computationally more attractive. The two static models are based on different recourse models. Next, we show that model \hyperref[sro:1]{Static-Sequential} and model \hyperref[sro:2]{Static-Concurrent} are equivalent with the support of the box uncertainty set.

\begin{thm}
\label{thm:4}
The static robust models \hyperref[sro:2]{Static-Concurrent} and \hyperref[sro:1]{Static-Sequential} are equivalent.
\end{thm}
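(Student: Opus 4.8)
The plan is to derive \textbf{Theorem~\ref{thm:4}} from the equivalences already established rather than attack the two static models head-on. By Theorem~\ref{thm:1}, \hyperref[sro:1]{Static-Sequential} is equivalent to \hyperref[2ro:1]{Two-stage-Sequential}, and by Theorem~\ref{thm:3}, \hyperref[sro:2]{Static-Concurrent} is equivalent to \hyperref[2roc:1]{Two-stage-Concurrent}; so it suffices to show \hyperref[2ro:1]{Two-stage-Sequential} and \hyperref[2roc:1]{Two-stage-Concurrent} are equivalent. These two models share the same first-stage feasible set (constraints \eqref{opc:2}--\eqref{opc:6} and \eqref{2ro:2}) and the same first-stage objective term $\sum_{i\in N}s_i\sum_{j\in N^+\setminus\{i\}}x_{ij}$; they differ only in the recourse value, $\min_{\mathbf d\in\mathcal U}RS(\mathbf x,\mathbf d)$ versus $\min_{\mathbf d\in\mathcal U}RC(\mathbf x,\mathbf d)$. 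Hence the theorem follows once
\[
\min_{\mathbf d\in\mathcal U}RS(\mathbf x,\mathbf d)=\min_{\mathbf d\in\mathcal U}RC(\mathbf x,\mathbf d)
\]
is proved for every first-stage path $\mathbf x$ satisfying \eqref{opc:2}--\eqref{opc:6},\eqref{2ro:2}.

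I would establish this identity with three ingredients. First, \emph{pointwise domination}: for any realization $\mathbf d$ the vertex at which the sequential recourse of Algorithm~\ref{algorithm:1} is forced to abort is itself an admissible return vertex for the concurrent recourse, which then picks the loss-minimizing return vertex, so $RS(\mathbf x,\mathbf d)\le RC(\mathbf x,\mathbf d)$ for all $\mathbf d$ and therefore $\min_{\mathbf d\in\mathcal U}RS(\mathbf x,\mathbf d)\le\min_{\mathbf d\in\mathcal U}RC(\mathbf x,\mathbf d)$ --- a half that needs nothing about the uncertainty set. Second, \emph{reduction to the box vertex}: with $\mathbf d^{u}=\bar{\mathbf d}+\Theta\hat{\mathbf d}$ we have, exactly as in the proof of Theorem~\ref{thm:3}, $\mathbf d^{u}\in\mathcal U$ and $\mathbf d^{u}\succeq\mathbf d$ for every $\mathbf d\in\mathcal U$; since only the weights on the arcs of $\mathbf x$ enter the recourse problems and they enter the length constraints \eqref{2rm1:3} and \eqref{2rm2:5} with non-negative coefficients (note $x_{ij}-y_{ij}\ge0$ by \eqref{2rm2:2}), enlarging $\mathbf d$ can only tighten the recourse feasible set and increase the incurred loss, so $RS(\mathbf x,\cdot)$ and $RC(\mathbf x,\cdot)$ are non-increasing in the element-wise order and both worst cases are attained at $\mathbf d^{u}$: $\min_{\mathbf d\in\mathcal U}RS(\mathbf x,\mathbf d)=RS(\mathbf x,\mathbf d^{u})$ and $\min_{\mathbf d\in\mathcal U}RC(\mathbf x,\mathbf d)=RC(\mathbf x,\mathbf d^{u})$. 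Third, \emph{agreement at $\mathbf d^{u}$}: writing $\mathbf x$ as the node sequence $(v_0,v_1,\dots,v_n,v_0)$ and $\ell(k)=d^{u}_{v_0v_1}+\cdots+d^{u}_{v_{k-1}v_k}+\bar d_{v_kv_0}$, Algorithm~\ref{algorithm:1} discards exactly $v_{k^{\ast}},\dots,v_n$ with $k^{\ast}$ the smallest index with $\ell(k^{\ast})>L$, while Algorithm~\ref{algorithm:2} discards exactly $v_{\hat k+1},\dots,v_n$ with $\hat k$ the largest index with $\ell(\hat k)\le L$; since
\[
\ell(k+1)-\ell(k)=d^{u}_{v_kv_{k+1}}+\bar d_{v_{k+1}v_0}-\bar d_{v_kv_0}\ge \bar d_{v_kv_{k+1}}+\bar d_{v_{k+1}v_0}-\bar d_{v_kv_0}\ge0
\]
by $d^{u}\ge\bar d$ and the triangle inequality for the expected weights, the sequence $\ell$ is non-decreasing, hence $k^{\ast}=\hat k+1$, both recourses discard the same suffix of $\mathbf x$, and $RS(\mathbf x,\mathbf d^{u})=RC(\mathbf x,\mathbf d^{u})$. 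Combining the three ingredients yields the identity, and the theorem follows.

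The step I expect to be the real obstacle is the third one. The sequential recourse is myopic --- it aborts the first time the running budget is exhausted --- whereas the concurrent recourse aborts at the globally best vertex, so the two can only coincide if the ``travel out and return directly'' length $\ell(k)$ never decreases as one advances along the path; this monotonicity is precisely the triangle inequality for the nominal weights $\bar d$ (which holds for the Euclidean distances of Section~\ref{section:op}), and I would state it explicitly as a standing assumption. A more self-contained alternative bypasses the two-stage detour and argues directly on the static models, converting an optimal solution of \hyperref[sro:2]{Static-Concurrent} into a feasible solution of \hyperref[sro:1]{Static-Sequential} of equal objective (and conversely) by the apagoge device used in the proofs of Theorems~\ref{thm:1} and \ref{thm:3}; but the non-routine ingredient there is the same, since monotonicity of $\ell$ is exactly what guarantees that the suffix cancelled by \hyperref[sro:2]{Static-Concurrent} can be reproduced by the position-indexed variables $z_k$ of \hyperref[sro:1]{Static-Sequential} without violating its intermediate-stage length constraints.
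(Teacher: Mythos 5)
Your argument is correct, and its technical heart is the same as the paper's: both proofs ultimately rest on the observation that at the box vertex $\mathbf d^{u}=\bar{\mathbf d}+\Theta\hat{\mathbf d}$ the ``out-and-return'' length $\ell(k)$ is non-decreasing along the path, which is exactly the triangle-inequality step (relation 1 in the paper's chain \eqref{thm4:3}). What differs is the scaffolding. The paper attacks the two static models directly: it maps an optimal solution of \hyperref[sro:2]{Static-Concurrent} to position-indexed variables $(x_{ijk}^*,y_i^*,z_k^*)$ and shows that the single robust length constraint \eqref{sroc:2} at the last reachable position $K'$ implies, by recursing the triangle inequality downward, all the intermediate constraints \eqref{sro:2} for $K\le K'$; the equivalence of the two two-stage models is then deduced afterwards as Corollary \ref{cor:1}. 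You reverse the order of deduction: you invoke Theorems \ref{thm:1} and \ref{thm:3} to reduce everything to $\min_{\mathbf d\in\mathcal U}RS(\mathbf x,\mathbf d)=\min_{\mathbf d\in\mathcal U}RC(\mathbf x,\mathbf d)$ for each fixed first-stage path, and prove that identity via pointwise domination, monotonicity of both recourse values in $\mathbf d$ (so both worst cases sit at $\mathbf d^{u}$), and agreement of Algorithms \ref{algorithm:1} and \ref{algorithm:2} at $\mathbf d^{u}$. Your route is arguably cleaner conceptually and makes explicit two facts the paper leaves implicit (that the worst case is attained at $\mathbf d^{u}$, and why the myopic sequential abort coincides with the optimal concurrent abort there), at the mild cost of leaning on the unproved-but-asserted claim that the two algorithms faithfully compute the MIP recourse values $RS$ and $RC$; the paper's version stays entirely within the MIP constraints. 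Your remark that monotonicity of $\ell$ (equivalently, the triangle inequality for the nominal weights) should be a standing assumption is well taken --- the paper uses it silently, justified only by the weights being Euclidean distances.
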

\begin{proof}

First, suppose $(x_{ij}^*,x_{ijk}^*,y_i^*,z_k^*)$ is the optimal solution of the static robust model \hyperref[sro:1]{Static-Sequential}, and we introduce decision variable $y_{ij}$ which is described in \hyperref[eq:rc]{Recourse-Concurrent} model for model \hyperref[sro:1]{Static-Sequential}, then the optimal solution $(x_{ij}^*,x_{ijk}^*,y_i^*,z_k^*)$ can be mapped to an optimal solution $(x_{ij}^*,y_{ij}^*)$ of model \hyperref[sro:1]{Static-Sequential}, and it is clear that $(x_{ij}^*,y_{ij}^*)$ is feasible to the static robust model \hyperref[sro:2]{Static-Concurrent}.

Then, suppose $(x_{ij}^*,y_{ij}^*)$ is the optimal solution of the static robust model \hyperref[sro:2]{Static-Concurrent}, and we introduce variables $x_{ijk},y_i,z_k$ which is described in \hyperref[eq:rs]{Recourse-Sequential} model for model \hyperref[sro:2]{Static-Concurrent}, then the optimal solution $(x_{ij}^*,y_{ij}^*)$ can be mapped to an optimal solution $(x_{ij}^*,x_{ijk}^*,y_i^*,z_k^*)$ of model \hyperref[sro:2]{Static-Concurrent}, we now show that $(x_{ij}^*,x_{ijk}^*,y_i^*,z_k^*)$ is feasible to the static robust model  \hyperref[sro:1]{Static-Sequential}.

Suppose $z^*_{K'}=0$ and $z^*_{K'+1}=1$, then this means the nodes in the first-stage path become unreachable from the $(K'+1)$th node. Then, the length constraint \eqref{sroc:2} in the static robust model \hyperref[sro:2]{Static-Concurrent} is equivalent to
\begin{equation}
\sum_{k=1}^{K'}\sum_{(i,j)\in A}d_{ij}x^*_{ijk} + \sum_{(i,j)\in A}\bar{d}_{j0}x^*_{ijK'} \leq L,\forall \mathbf{d}\in\mathcal{U}\label{thm4:1}
\end{equation}

Based on the definition of the box uncertainty set $\mathcal{U}$, inequality \eqref{thm4:1} is equivalent to
\begin{equation}
\sum_{k=1}^{K'}\sum_{(i,j)\in A}(\bar{d}_{ij}+\Theta\hat{d}_{ij})x^*_{ijk} + \sum_{(i,j)\in A}\bar{d}_{j0}x^*_{ijK'} \leq L\label{thm4:2}
\end{equation}

We transform the left hand side of the above inequality as follows:
\begin{equation}
\begin{aligned}
&\sum_{k=1}^{K'}\sum_{(i,j)\in A}(\bar{d}_{ij}+\Theta\hat{d}_{ij})x^*_{ijk} + \sum_{(i,j)\in A}\bar{d}_{j0}x^*_{ijK'} \\
&=\sum_{k=1}^{K'-1}\sum_{(i,j)\in A}(\bar{d}_{ij}+\Theta\hat{d}_{ij})x^*_{ijk}+\sum_{(i,j)\in A}(\bar{d}_{ij}+\Theta\hat{d}_{ij})x^*_{ijK'} + \sum_{(i,j)\in A}\bar{d}_{j0}x^*_{ijK'}\\
&\geq\sum_{k=1}^{K'-1}\sum_{(i,j)\in A}(\bar{d}_{ij}+\Theta\hat{d}_{ij})x^*_{ijk}+\sum_{(i,j)\in A}(\bar{d}_{ij}+\bar{d}_{j0})x^*_{ijK'}\\
&\overset{1}{>} \sum_{k=1}^{K'-1}\sum_{(i,j)\in A}(\bar{d}_{ij}+\Theta\hat{d}_{ij})x^*_{ijk} + \sum_{(i,j)\in A}\bar{d}_{j0}x^*_{ij(K'-1)}\\
\end{aligned}
\label{thm4:3}
\end{equation}
where relation 1 is due to the triangle inequality.

Based on \eqref{thm4:2} and \eqref{thm4:3} we have 
\begin{equation}
\sum_{k=1}^{K'-1}\sum_{(i,j)\in A}(\bar{d}_{ij}+\Theta\hat{d}_{ij})x^*_{ijk} + \sum_{(i,j)\in A}\bar{d}_{j0}x^*_{ij(K'-1)} \leq L
\end{equation}
which implies 
\begin{equation}
\sum_{k=1}^{K'-1}\sum_{(i,j)\in A}d_{ij}x^*_{ijk} + \sum_{(i,j)\in A}\bar{d}_{j0}x^*_{ij(K'-1)} \leq L,\forall \mathbf{d}\in\mathcal{U}
\end{equation}

Following the above transformation recursively, finally we can get 
\begin{equation}
\sum_{k=1}^{K}\sum_{(i,j)\in A}d_{ij}x^*_{ijk} + \sum_{(i,j)\in A}\bar{d}_{j0}x^*_{ijK} \leq L,\forall K=1,...,K', \mathbf{d}\in\mathcal{U}
\end{equation}

So the optimal solution $(x_{ij}^*,x_{ijk}^*,y_i^*,z_k^*)$ is feasible to the static robust model  \hyperref[sro:1]{Static-Sequential}.

We conclude that the optimal solution of the static robust model \hyperref[sro:1]{Static-Sequential} is a feasible solution of the static robust model \hyperref[sro:2]{Static-Concurrent}, and the optimal solution of the static robust model \hyperref[sro:2]{Static-Concurrent} is a feasible solution of the static robust model \hyperref[sro:1]{Static-Sequential}. Then the two models have the same optimal solution, this implies the two models are equivalent.
\end{proof}

The following corollary shows the equivalence between two two-stage robust models.
\begin{cor}
\label{cor:1}
The two-stage robust models \hyperref[2ro:1]{Two-stage-Sequential} and \hyperref[2roc:1]{Two-stage-Concurrent} are equivalent.
\end{cor}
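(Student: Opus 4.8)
The plan is to obtain Corollary~\ref{cor:1} purely by transitivity, since all the substantive work has already been done in Theorems~\ref{thm:1}, \ref{thm:3}, and \ref{thm:4}. First I would invoke Theorem~\ref{thm:1} to replace \hyperref[2ro:1]{Two-stage-Sequential} by its static counterpart \hyperref[sro:1]{Static-Sequential}; then Theorem~\ref{thm:4} to pass from \hyperref[sro:1]{Static-Sequential} to \hyperref[sro:2]{Static-Concurrent}; then Theorem~\ref{thm:3} to pass from \hyperref[sro:2]{Static-Concurrent} to \hyperref[2roc:1]{Two-stage-Concurrent}. Chaining these three equivalences yields that \hyperref[2ro:1]{Two-stage-Sequential} and \hyperref[2roc:1]{Two-stage-Concurrent} are equivalent.

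The one thing that needs a sentence of justification is that the notion of equivalence used throughout is transitive. In each of the three theorems, ``equivalent'' means the two models attain the same optimal objective value and an optimal solution of one can be extended (by introducing the auxiliary variables of the other recourse formulation) or restricted to an optimal solution of the other. Both of these properties compose: the objective-value equalities chain directly, and the solution maps chain as well, so that starting from an optimal $(x_{ij}^*,y_{ij}^*)$ of \hyperref[2roc:1]{Two-stage-Concurrent} one recovers an optimal solution of \hyperref[sro:2]{Static-Concurrent} via Theorem~\ref{thm:3}, then of \hyperref[sro:1]{Static-Sequential} via Theorem~\ref{thm:4}, then of \hyperref[2ro:1]{Two-stage-Sequential} via Theorem~\ref{thm:1}, and symmetrically in the reverse direction. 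It is also worth remarking that all four models are defined over the \emph{same} box uncertainty set $\mathcal{U}$ from \eqref{uncertainty:set1}--\eqref{uncertainty:set2}, so the hypotheses of the three theorems are simultaneously in force.

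I do not anticipate any real obstacle here: the corollary is a bookkeeping consequence of the preceding results, and the only ``hard'' ingredient — the triangle-inequality argument relating the concurrent and sequential length constraints under the box set — has already been carried out inside the proof of Theorem~\ref{thm:4}.
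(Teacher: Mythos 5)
Your proposal is correct and takes exactly the same route as the paper, which simply invokes Theorems~\ref{thm:1}, \ref{thm:3}, and~\ref{thm:4} and chains the equivalences. Your added remark on the transitivity of the equivalence notion is a harmless (and slightly more careful) elaboration of the same one-line argument.
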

\begin{proof}
Based on Theorem \ref{thm:1}, Theorem \ref{thm:3} and Theorem \ref{thm:4}, we can draw this conclusion.
\end{proof}

Based on Theorems \ref{thm:1}-\ref{thm:4} and Corollary \ref{cor:1}, we know that the four models \hyperref[2ro:1]{Two-stage-Sequential}, \hyperref[sro:1]{Static-Sequential}, \hyperref[2roc:1]{Two-stage-Concurrent} and \hyperref[sro:2]{Static-Concurrent} are equivalent to each other. It is an interesting conclusion that \hyperref[2ro:1]{Two-stage-Sequential} and \hyperref[2roc:1]{Two-stage-Concurrent} are equivalent with the \emph{box uncertainty set} defined even though they are based on different recourse models. We can use the \hyperref[2roc:1]{Two-stage-Concurrent} model to deal with the two-stage OPSW with sequentially realized weights, and the \hyperref[2roc:1]{Two-stage-Concurrent} model is computationally more efficient than the \hyperref[2ro:1]{Two-stage-Sequential} model.

\section{Case study}
\label{section:cs}
In this section, a case study is presented to illustrate the effectiveness of the proposed two-stage robust models for OPSW. 

\subsection{Test instance}
The test instance used in our experiments is based on problem set 3 from \cite{tsiligirides1984heuristic} which was originally used for the deterministic OP. Problem set 3 contains 20 instances with the same 33 nodes and 20 varying length limits. We only consider 3 length limits: 80, 90 and 100. In the instance, the end point is ignored and the start point is kept as the depot location. The problem set can be found with URL: \url{http://www.mech.kuleuven.be/en/cib/op}.

To generate the uncertain instances for OPSW, we use the Euclidean distances between nodes as the expected weights $\bar{d}_{ij}$. Two kinds of uncertain instances are generated with the deviation values $\hat{d}_{ij}$ chosen as $0.2\bar{d}_{ij}$ and $0.5\bar{d}_{ij}$ respectively. Then, based on different length limits and different deviation values, we can get a total of 6 uncertain instances for OPSW. 

\subsection{Experiments}
In order to evaluate the effectiveness of the proposed two-stage robust models, we use one-stage robust model for OPSW as a comparison. In the one-stage robust model, all the decision variables are ``here and now'' and there are no recourse decision variables considered in the model. The one-stage robust model follows the traditional RO paradigm and is formulated as follows:

\begin{subequations}\label{eq:onestage}
(One-stage-RO):\begin{alignat}{2}
    \text{maximize}\qquad & \sum_{i\in N}s_i\sum_{j\in N^+\setminus\{i\}}x_{ij}\\
    \text{subject to }\qquad & \eqref{opc:2}-\eqref{opc:6}\\
    &\sum_{(i,j)\in A}d_{ij} x_{ij} \leq L,\forall \mathbf{d}\in\mathcal{U}
    \end{alignat}
  \end{subequations}

As proven in Section \ref{section:2ro}, the \hyperref[sro:2]{Static-Concurrent} model is equivalent to the two-stage robust models  \hyperref[2ro:1]{Two-stage-Sequential} and \hyperref[2roc:1]{Two-stage-Concurrent} and is computationally more efficient than the \hyperref[sro:1]{Static-Sequential} model. So we use \hyperref[sro:2]{Static-Concurrent} model to solve the two-stage OPSW with sequential or concurrent realized weights.

We generate 1000 scenarios for the $\hat{d}_{ij}=0.2\bar{d}_{ij}$ and $\hat{d}_{ij}=0.5\bar{d}_{ij}$ cases respectively for simulation purposes. Suppose the uncertain weights $\tilde{d}_{ij}$ are uniformly distributed on interval $[\bar{d}_{ij}-\hat{d}_{ij},\bar{d}_{ij}+\hat{d}_{ij}]$. Then the realizations $d_{ij}$ are sampled uniformly on the interval $[\bar{d}_{ij}-\hat{d}_{ij},\bar{d}_{ij}+\hat{d}_{ij}]$. 

For each uncertain instance, the \hyperref[sro:2]{Static-Concurrent} model and the \hyperref[eq:onestage]{One-stage-RO} model are solved by CPLEX 12.6 with $\Theta=0,0.1,...,1$ respectively. The robust solutions obtained by the \hyperref[sro:2]{Static-Concurrent} model and the \hyperref[eq:onestage]{One-stage-RO} model are then simulated with the 1000 scenarios for the two-stage OPSW with \hyperref[eq:rs]{Recourse-Sequential} and \hyperref[eq:rc]{Recourse-Concurrent} actions. The objective values of the second-stage recourse problems are calculated by Algorithm \ref{algorithm:1} and Algorithm \ref{algorithm:2}. The mean objective values and the standard deviations of the robust solutions are statistically summarized. 

\subsection{Numerical results}
Tables \ref{table:1}-\ref{table:6} show the numerical results of the 6 instances with different length limits and different deviation values. The Obj. in the tables represents the objective value obtained by the \hyperref[eq:onestage]{One-stage-RO} model or the \hyperref[sro:2]{Static-Concurrent} model. First we can observe that the objective values of the robust solutions are decreasing as parameter $\Theta$ increases for both the one-stage and two-stage RO models. As $\Theta$ increases, the size of the uncertainty set $\mathcal{U}$ is increasing which means the protection level is increasing, and the resulting robust solution is more conservative.

\begin{table*}[!ht]
      \small
      \caption{Numerical results of the instance with $L=80$ and $\hat{d}_{ij}= 0.2\bar{d}_{ij} $}
      \begin{center}
      \begin{tabular}{||c| c| c| c| c| c| c| c| c|c|c||}
      \hline
      \hline
      \multirow{3}{*}{$\Theta$} &
      \multicolumn{5}{|c|}{One-stage RO} &
      \multicolumn{5}{|c||}{Two-stage RO} \\
      \cline{2-6}
      \cline{7-11}
      &\multirow{2}{*}{Obj.} &
      \multicolumn{2}{|c|}{Sequential} &
      \multicolumn{2}{|c|}{Concurrent} &
      \multirow{2}{*}{Obj.} &
      \multicolumn{2}{|c|}{Sequential} &
      \multicolumn{2}{|c||}{Concurrent}\\
      \cline{3-6}
      \cline{8-11}
      &&Mean&Std.&Mean&Std.&&Mean&Std.&Mean&Std.\\
      \hline
  \hline
0.00 & 710.00 & 680.19 & 41.66 & 681.63 & 41.43 & 710.00 & \textbf{693.76} & \textbf{27.37} & \textbf{695.29} & \textbf{24.89} \\ 
   \hline
0.10 & 690.00 & 676.52 & 28.56 & 677.93 & 27.62 & 700.00 & \textbf{691.55} & \textbf{19.85} & \textbf{692.43} & \textbf{18.59} \\ 
   \hline
0.20 & 690.00 & \textbf{688.48} & \textbf{7.08} & \textbf{688.82} & \textbf{6.36} & 690.00 & 685.64 & 18.87 & 685.70 & 18.79 \\ 
   \hline
0.30 & 680.00 & 679.58 & 6.02 & 679.58 & 6.02 & 680.00 & \textbf{679.93} & \textbf{1.38} & \textbf{679.94} & \textbf{1.34} \\ 
   \hline
0.40 & 670.00 & 670.00 & \textbf{0.00} & 670.00 & \textbf{0.00} & 670.00 & \textbf{679.77} & 1.50 & \textbf{679.78} & 1.47 \\ 
   \hline
0.50 & 660.00 & \textbf{660.00} & \textbf{0.00} & \textbf{660.00} & \textbf{0.00} & 660.00 & \textbf{660.00} & \textbf{0.00} & \textbf{660.00} & \textbf{0.00} \\ 
   \hline
0.60 & 650.00 & \textbf{650.00} & \textbf{0.00} & \textbf{650.00} & \textbf{0.00} & 650.00 & \textbf{650.00} & \textbf{0.00} & \textbf{650.00} & \textbf{0.00} \\ 
   \hline
0.70 & 640.00 & 640.00 & \textbf{0.00} & 640.00 & \textbf{0.00} & 640.00 & \textbf{640.68} & 5.17 & \textbf{640.68} & 5.17 \\ 
   \hline
0.80 & 630.00 & 630.00 & 0.00 & 630.00 & 0.00 & 640.00 & \textbf{640.00} & \textbf{0.00} & \textbf{640.00} & \textbf{0.00} \\ 
   \hline
0.90 & 630.00 & \textbf{630.00} & \textbf{0.00} & \textbf{630.00} & \textbf{0.00} & 630.00 & \textbf{630.00} & \textbf{0.00} & \textbf{630.00} & \textbf{0.00} \\ 
   \hline
1.00 & 620.00 & 620.00 & 0.00 & 620.00 & 0.00 & 630.00 & \textbf{630.00} & \textbf{0.00} & \textbf{630.00} & \textbf{0.00} \\ 
  \hline
\hline
      \end{tabular}
      \end{center}
      \label{table:1}
      \end{table*}

\begin{table*}[!ht]
\small
      \caption{Numerical results of the instance with $L=80$ and $\hat{d}_{ij}= 0.5\bar{d}_{ij} $}
      \begin{center}
      \begin{tabular}{||c| c| c| c| c| c| c| c| c|c|c||}
      \hline
      \hline
      \multirow{3}{*}{$\Theta$} &
      \multicolumn{5}{|c|}{One-stage RO} &
      \multicolumn{5}{|c||}{Two-stage RO} \\
      \cline{2-6}
      \cline{7-11}
      &\multirow{2}{*}{Obj.} &
      \multicolumn{2}{|c|}{Sequential} &
      \multicolumn{2}{|c|}{Concurrent} &
      \multirow{2}{*}{Obj.} &
      \multicolumn{2}{|c|}{Sequential} &
      \multicolumn{2}{|c||}{Concurrent}\\
      \cline{3-6}
      \cline{8-11}
      &&Mean&Std.&Mean&Std.&&Mean&Std.&Mean&Std.\\
      \hline
  \hline
0.00 & 710.00 & \textbf{652.47} & \textbf{77.39} & \textbf{657.90} & \textbf{73.04} & 710.00 & \textbf{652.47} & \textbf{77.39} & \textbf{657.90} & \textbf{73.04} \\ 
   \hline
0.10 & 680.00 & 662.97 & 40.82 & 664.29 & 39.68 & 680.00 & \textbf{680.41} & \textbf{25.69} & \textbf{681.26} & \textbf{24.96} \\ 
   \hline
0.20 & 660.00 & \textbf{658.71} & \textbf{8.26} & \textbf{658.85} & \textbf{7.74} & 660.00 & 655.91 & 17.83 & 656.05 & 17.58 \\ 
   \hline
0.30 & 640.00 & \textbf{639.83} & \textbf{2.07} & \textbf{639.86} & \textbf{2.00} & 640.00 & 639.48 & 6.43 & 639.54 & 6.15 \\ 
   \hline
0.40 & 620.00 & 620.00 & \textbf{0.00} & 620.00 & \textbf{0.00} & 630.00 & \textbf{629.99} & 0.32 & \textbf{629.99} & 0.32 \\ 
   \hline
0.50 & 610.00 & 610.00 & \textbf{0.00} & 610.00 & \textbf{0.00} & 610.00 & \textbf{631.71} & 19.40 & \textbf{633.11} & 19.23 \\ 
   \hline
0.60 & 590.00 & 590.00 & 0.00 & 590.00 & 0.00 & 600.00 & \textbf{600.00} & \textbf{0.00} & \textbf{600.00} & \textbf{0.00} \\ 
   \hline
0.70 & 570.00 & 570.00 & 0.00 & 570.00 & 0.00 & 580.00 & \textbf{580.00} & \textbf{0.00} & \textbf{580.00} & \textbf{0.00} \\ 
   \hline
0.80 & 570.00 & 570.00 & 0.00 & 570.00 & 0.00 & 570.00 & \textbf{600.00} & \textbf{0.00} & \textbf{600.00} & \textbf{0.00} \\ 
   \hline
0.90 & 550.00 & 550.00 & \textbf{0.00} & 550.00 & \textbf{0.00} & 560.00 & \textbf{593.93} & 24.90 & \textbf{596.25} & 28.57 \\ 
   \hline
1.00 & 540.00 & 540.00 & 0.00 & 540.00 & 0.00 & 550.00 & \textbf{570.00} & \textbf{0.00} & \textbf{570.00} & \textbf{0.00} \\ 
  \hline
\hline
      \end{tabular}
      \end{center}
      \label{table:2}
      \end{table*}

\begin{table*}[!ht]
\small
      \caption{Numerical results of the instance with $L=90$ and $\hat{d}_{ij}= 0.2\bar{d}_{ij} $}
      \begin{center}
      \begin{tabular}{||c| c| c| c| c| c| c| c| c|c|c||}
      \hline
      \hline
      \multirow{3}{*}{$\Theta$} &
      \multicolumn{5}{|c|}{One-stage RO} &
      \multicolumn{5}{|c||}{Two-stage RO} \\
      \cline{2-6}
      \cline{7-11}
      &\multirow{2}{*}{Obj.} &
      \multicolumn{2}{|c|}{Sequential} &
      \multicolumn{2}{|c|}{Concurrent} &
      \multirow{2}{*}{Obj.} &
      \multicolumn{2}{|c|}{Sequential} &
      \multicolumn{2}{|c||}{Concurrent}\\
      \cline{3-6}
      \cline{8-11}
      &&Mean&Std.&Mean&Std.&&Mean&Std.&Mean&Std.\\
      \hline
  \hline
0.00 & 770.00 & \textbf{744.96} & \textbf{35.74} & \textbf{748.44} & \textbf{33.24} & 770 & 728.16 & 44.97 & 729.68 & 45.17 \\ 
   \hline
0.10 & 760.00 & 750.74 & 25.94 & 751.33 & 25.31 & 760 & \textbf{754.09} & \textbf{17.22} & \textbf{754.47} & \textbf{16.44} \\ 
   \hline
0.20 & 750.00 & 746.09 & 17.02 & 746.51 & 16.21 & 750 & \textbf{752.69} & \textbf{10.43} & \textbf{752.93} & \textbf{9.90} \\ 
   \hline
0.30 & 740.00 & 739.80 & \textbf{2.36} & 739.85 & \textbf{2.25} & 740 & \textbf{747.12} & 5.07 &\textbf{ 747.26} & 4.81 \\ 
   \hline
0.40 & 730.00 & \textbf{730.00} & \textbf{0.00} & \textbf{730.00} & \textbf{0.00} & 730 & \textbf{730.00} & \textbf{0.00} & \textbf{730.00} & \textbf{0.00} \\ 
   \hline
0.50 & 720.00 & \textbf{720.00} & \textbf{0.00} & \textbf{720.00} & \textbf{0.00} & 720 & \textbf{720.00} & \textbf{0.00} & \textbf{720.00} & \textbf{0.00} \\ 
   \hline
0.60 & 710.00 & \textbf{710.00} & \textbf{0.00 }& \textbf{710.00} & \textbf{0.00} & 710 & \textbf{710.00} & \textbf{0.00} & \textbf{710.00} & \textbf{0.00} \\ 
   \hline
0.70 & 700.00 & 700.00 & \textbf{0.00} & 700.00 & \textbf{0.00} & 710 & \textbf{736.43} & 4.83 & \textbf{736.71} & 4.74 \\ 
   \hline
0.80 & 690.00 &\textbf{ 690.00} & \textbf{0.00} & \textbf{690.00} & \textbf{0.00} & 690 & \textbf{690.00 }& \textbf{0.00} & \textbf{690.00} & \textbf{0.00} \\ 
   \hline
0.90 & 680.00 & 680.00 & 0.00 & 680.00 & 0.00 & 690 & \textbf{700.00} & \textbf{0.00} & \textbf{700.00} & \textbf{0.00} \\ 
   \hline
1.00 & 670.00 & 670.00 & 0.00 & 670.00 & 0.00 & 680 & \textbf{680.00} & \textbf{0.00} &\textbf{ 680.00} & \textbf{0.00} \\ 
  \hline
\hline
      \end{tabular}
      \end{center}
      \label{table:3}
      \end{table*}

\begin{table*}[!ht]
\small
      \caption{Numerical results of the instance with $L=90$ and $\hat{d}_{ij}= 0.5\bar{d}_{ij} $}
      \begin{center}
      \begin{tabular}{||c| c| c| c| c| c| c| c| c|c|c||}
      \hline
      \hline
      \multirow{3}{*}{$\Theta$} &
      \multicolumn{5}{|c|}{One-stage RO} &
      \multicolumn{5}{|c||}{Two-stage RO} \\
      \cline{2-6}
      \cline{7-11}
      &\multirow{2}{*}{Obj.} &
      \multicolumn{2}{|c|}{Sequential} &
      \multicolumn{2}{|c|}{Concurrent} &
      \multirow{2}{*}{Obj.} &
      \multicolumn{2}{|c|}{Sequential} &
      \multicolumn{2}{|c||}{Concurrent}\\
      \cline{3-6}
      \cline{8-11}
      &&Mean&Std.&Mean&Std.&&Mean&Std.&Mean&Std.\\
      \hline
  \hline
0.00 & 770.00 & \textbf{722.34} & \textbf{58.05 }& \textbf{727.46} & \textbf{57.50} & 770 & 719.97 & 59.25 & 725.56 & 58.42 \\ 
   \hline
0.10 & 740.00 & 718.99 & 48.56 & 721.56 & 43.38 & 740 & \textbf{730.86} & \textbf{38.69} & \textbf{733.30} & \textbf{36.30} \\ 
   \hline
0.20 & 720.00 & \textbf{718.37} & \textbf{10.40} & \textbf{718.43} & \textbf{10.37} & 720 & 716.06 & 16.54 & 716.74 & 15.18 \\ 
   \hline
0.30 & 690.00 & 689.98 & \textbf{0.45 }& 689.98 & \textbf{0.45} & 700 & \textbf{699.83} & 2.88 & \textbf{699.88} & 2.57 \\ 
   \hline
0.40 & 670.00 & 670.00 & \textbf{0.00} & 670.00 & \textbf{0.00} & 680 & \textbf{683.83} & 12.27 & \textbf{684.13} & 13.68 \\ 
   \hline
0.50 & 650.00 & 650.00 & \textbf{0.00} & 650.00 & \textbf{0.00} & 660 & \textbf{679.90} & 1.18 & \textbf{679.92} & 1.09 \\ 
   \hline
0.60 & 640.00 & 640.00 & \textbf{0.00} & 640.00 & \textbf{0.00} & 640 & \textbf{701.27} & 26.59 & \textbf{702.86} & 26.30 \\ 
   \hline
0.70 & 620.00 & 620.00 & 0.00 & 620.00 & 0.00 & 630 & \textbf{630.00} & \textbf{0.00} & \textbf{630.00} & \textbf{0.00} \\ 
   \hline
0.80 & 610.00 & 610.00 & \textbf{0.00} & 610.00 & \textbf{0.00} & 610 & \textbf{650.00} & 13.75 & \textbf{650.21} & 14.85 \\ 
   \hline
0.90 & 600.00 & 600.00 & \textbf{0.00 }& 600.00 & \textbf{0.00} & 600 & \textbf{649.83} & 26.63 & \textbf{652.74} & 31.74 \\ 
   \hline
1.00 & 580.00 & 580.00 & \textbf{0.00} & 580.00 & \textbf{0.00} & 590 & \textbf{604.88} & 13.09 & \textbf{608.52} & 16.38 \\ 
  \hline
\hline
      \end{tabular}
      \end{center}
      \label{table:4}
      \end{table*}

\begin{table*}[!ht]
\small
      \caption{Numerical results of the instance with $L=100$ and $\hat{d}_{ij}= 0.2\bar{d}_{ij} $}
      \begin{center}
      \begin{tabular}{||c| c| c| c| c| c| c| c| c|c|c||}
      \hline
      \hline
      \multirow{3}{*}{$\Theta$} &
      \multicolumn{5}{|c|}{One-stage RO} &
      \multicolumn{5}{|c||}{Two-stage RO} \\
      \cline{2-6}
      \cline{7-11}
      &\multirow{2}{*}{Obj.} &
      \multicolumn{2}{|c|}{Sequential} &
      \multicolumn{2}{|c|}{Concurrent} &
      \multirow{2}{*}{Obj.} &
      \multicolumn{2}{|c|}{Sequential} &
      \multicolumn{2}{|c||}{Concurrent}\\
      \cline{3-6}
      \cline{8-11}
      &&Mean&Std.&Mean&Std.&&Mean&Std.&Mean&Std.\\
      \hline
  \hline
0.00 & 800 & \textbf{795.39} & \textbf{7.86} & \textbf{795.68} & \textbf{7.84} & 800 & 776.88 & 45.66 & 778.99 & 41.13 \\ 
   \hline
0.10 & 800 & \textbf{798.06} & \textbf{5.57} & \textbf{798.14} & \textbf{5.53} & 800 & 797.02 & 7.11 & 797.20 & 7.05 \\ 
   \hline
0.20 & 790 & \textbf{789.83} & \textbf{1.86 }& \textbf{789.83} & 1.86 & 790 & 789.77 & \textbf{1.86} & 789.78 & \textbf{1.83} \\ 
   \hline
0.30 & 790 &\textbf{ 789.98} & \textbf{0.45 }& \textbf{789.99} & \textbf{0.32} & 790 & 789.89 & 1.70 & 789.89 & 1.70 \\ 
   \hline
0.40 & 780 & 780.00 & 0.00 & 780.00 & 0.00 & 790 & \textbf{790.00} & \textbf{0.00} & \textbf{790.00} & \textbf{0.00} \\ 
   \hline
0.50 & 780 & \textbf{780.00} & \textbf{0.00} & \textbf{780.00} & \textbf{0.00} & 780 & \textbf{780.00} &\textbf{ 0.00} & \textbf{780.00} & \textbf{0.00} \\ 
   \hline
0.60 & 760 & 760.00 & 0.00 & 760.00 & 0.00 & 770 & \textbf{770.00} &\textbf{ 0.00} & \textbf{770.00} & \textbf{0.00} \\ 
   \hline
0.70 & 760 & \textbf{760.00} & \textbf{0.00} & \textbf{760.00} & \textbf{0.00} & 760 & \textbf{760.00} & \textbf{0.00} & \textbf{760.00} & \textbf{0.00} \\ 
   \hline
0.80 & 740 & 740.00 & \textbf{0.00} & 740.00 & \textbf{0.00} & 750 & \textbf{769.39} & 2.39 & \textbf{769.48} & 2.22 \\ 
   \hline
0.90 & 740 & \textbf{740.00} & \textbf{0.00 }& \textbf{740.00} & \textbf{0.00} & 740 & \textbf{740.00} & \textbf{0.00} & \textbf{740.00} & \textbf{0.00} \\ 
   \hline
1.00 & 730 & \textbf{730.00} & \textbf{0.00} & \textbf{730.00} & \textbf{0.00} & 730 & \textbf{730.00} & \textbf{0.00} & \textbf{730.00} & \textbf{0.00} \\ 
  \hline
\hline
      \end{tabular}
      \end{center}
      \label{table:5}
      \end{table*}

\begin{table*}[!ht]
\small
      \caption{Numerical results of the instance with $L=100$ and $\hat{d}_{ij}= 0.5\bar{d}_{ij} $}
      \begin{center}
      \begin{tabular}{||c| c| c| c| c| c| c| c| c|c|c||}
      \hline
      \hline
      \multirow{3}{*}{$\Theta$} &
      \multicolumn{5}{|c|}{One-stage RO} &
      \multicolumn{5}{|c||}{Two-stage RO} \\
      \cline{2-6}
      \cline{7-11}
      &\multirow{2}{*}{Obj.} &
      \multicolumn{2}{|c|}{Sequential} &
      \multicolumn{2}{|c|}{Concurrent} &
      \multirow{2}{*}{Obj.} &
      \multicolumn{2}{|c|}{Sequential} &
      \multicolumn{2}{|c||}{Concurrent}\\
      \cline{3-6}
      \cline{8-11}
      &&Mean&Std.&Mean&Std.&&Mean&Std.&Mean&Std.\\
      \hline
  \hline
0.00 & 800 & 778.79 & 43.57 & 779.34 & 43.51 & 800 & \textbf{787.07} & \textbf{19.79} & \textbf{787.63} & \textbf{18.57} \\ 
   \hline
0.10 & 790 & 774.72 & 40.31 & 776.13 & 38.69 & 790 & \textbf{776.04} & \textbf{34.31} & \textbf{778.05} & \textbf{32.25} \\ 
   \hline
0.20 & 780 & 777.50 & 14.56 & 778.10 & 12.25 & 780 & \textbf{778.71} & \textbf{8.64} & \textbf{778.81} & \textbf{8.18} \\ 
   \hline
0.30 & 750 & 749.71 & 4.56 & 749.77 & 4.15 & 760 & \textbf{759.72} & \textbf{3.15 }& \textbf{759.79} & \textbf{2.84} \\ 
   \hline
0.40 & 730 & 730.00 & \textbf{0.00} & 730.00 & \textbf{0.00} & 730 & \textbf{749.27} & 3.57 & \textbf{749.33} & 3.44 \\ 
   \hline
0.50 & 710 &\textbf{ 710.00} & \textbf{0.00} & \textbf{710.00} & \textbf{0.00} & 710 & \textbf{710.00} & \textbf{0.00} & \textbf{710.00} & \textbf{0.00} \\ 
   \hline
0.60 & 690 & 690.00 & \textbf{0.00} & 690.00 & \textbf{0.00} & 690 & \textbf{719.75} & 2.29 & \textbf{719.81} & 2.02 \\ 
   \hline
0.70 & 670 & 670.00 & \textbf{0.00} & 670.00 & \textbf{0.00} & 670 & \textbf{718.05} & 38.05 & \textbf{720.08} & 38.58 \\ 
   \hline
0.80 & 650 & 650.00 & 0.00 & 650.00 & 0.00 & 660 & \textbf{720.00 }& \textbf{0.00} & \textbf{720.00} & \textbf{0.00} \\ 
   \hline
0.90 & 630 & 630.00 & 0.00 & 630.00 & 0.00 & 640 & \textbf{650.00} & \textbf{0.00} & \textbf{650.00} & \textbf{0.00} \\ 
   \hline
1.00 & 620 & 620.00 & 0.00 & 620.00 & 0.00 & 630 & \textbf{640.00} & \textbf{0.00} &\textbf{ 640.00} &\textbf{ 0.00} \\ 
  \hline
\hline
      \end{tabular}
      \end{center}
      \label{table:6}
      \end{table*}

For both the one-stage and two-stage robust models, the mean objective values with concurrent recourse are greater than or equal to the corresponding mean objective values with sequential recourse. The reason is that the concurrent recourse has more information on the uncertainty realizations than the sequential recourse, so the concurrent recourse can make a better recourse decision and achieve a lower loss of the collected score. However, the gaps between the mean objective values with sequential recourse and concurrent recourse are very small which means that the difference between the two recourse actions is small. 

Comparing the mean objective values of the one-stage RO and two-stage RO with the sequential recourse or concurrent recourse, the two-stage RO achieves better values than the one-stage RO in most cases. This is because that the two-stage RO considers the recourse decisions into the model but the one-stage RO only considers the first-stage decisions. We also notice that the one-stage RO can achieve better mean objective values than the two-stage RO in some cases. Figures \ref{fig:1}-\ref{fig:3} show visual comparisions between the one-stage RO and two-stage RO with sequential recourse. The comparisions between the one-stage RO and two-stage RO with concurrent recourse are visually similar. The figures clearly show that the two-stage RO dominates the one-stage RO in most cases, which show the effectiveness and superiority of the proposed two-stage robust models for dealing with the two-stage OPSW.

In Tables \ref{table:1}-\ref{table:6} we also report the standard deviations of the simulated robust solutions for both the one-stage RO and the two-stage RO. The standard deviations can reflect the stabilities of the obtained robust solutions. From the tables we can see that as parameter $\Theta$ increasing, the standard deviations tend to decrease, which means the robust solutions are more stable with a larger uncertainty set. We can also observe that the two-stage RO can mostly achieve better mean objective values and lower or small standard deviation values at the same time comparing with the one-stage RO. This further indicates that the proposed two-stage robust models can efficiently tackle the two-stage OPSW.

\begin{knitrout}
\definecolor{shadecolor}{rgb}{0.969, 0.969, 0.969}\color{fgcolor}\begin{figure}[!h]

{\centering \includegraphics[width=\maxwidth]{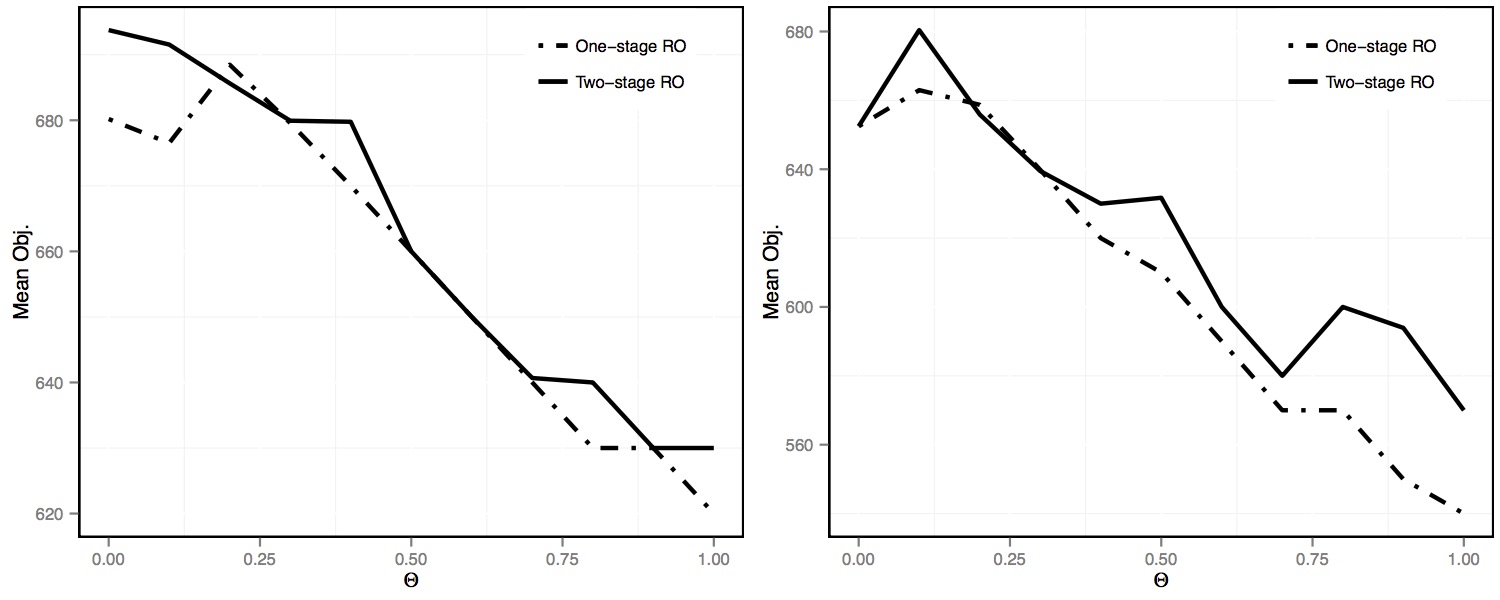} 

}

\caption{Comparision between the one-stage RO and two-stage RO with sequential recourse, $L=80$, $\hat{d}_{ij}= 0.2\bar{d}_{ij} $(left), $\hat{d}_{ij}= 0.5\bar{d}_{ij} $(right)}\label{fig:1}
\end{figure}

\end{knitrout}

\begin{knitrout}
\definecolor{shadecolor}{rgb}{0.969, 0.969, 0.969}\color{fgcolor}\begin{figure}[!h]

{\centering \includegraphics[width=\maxwidth]{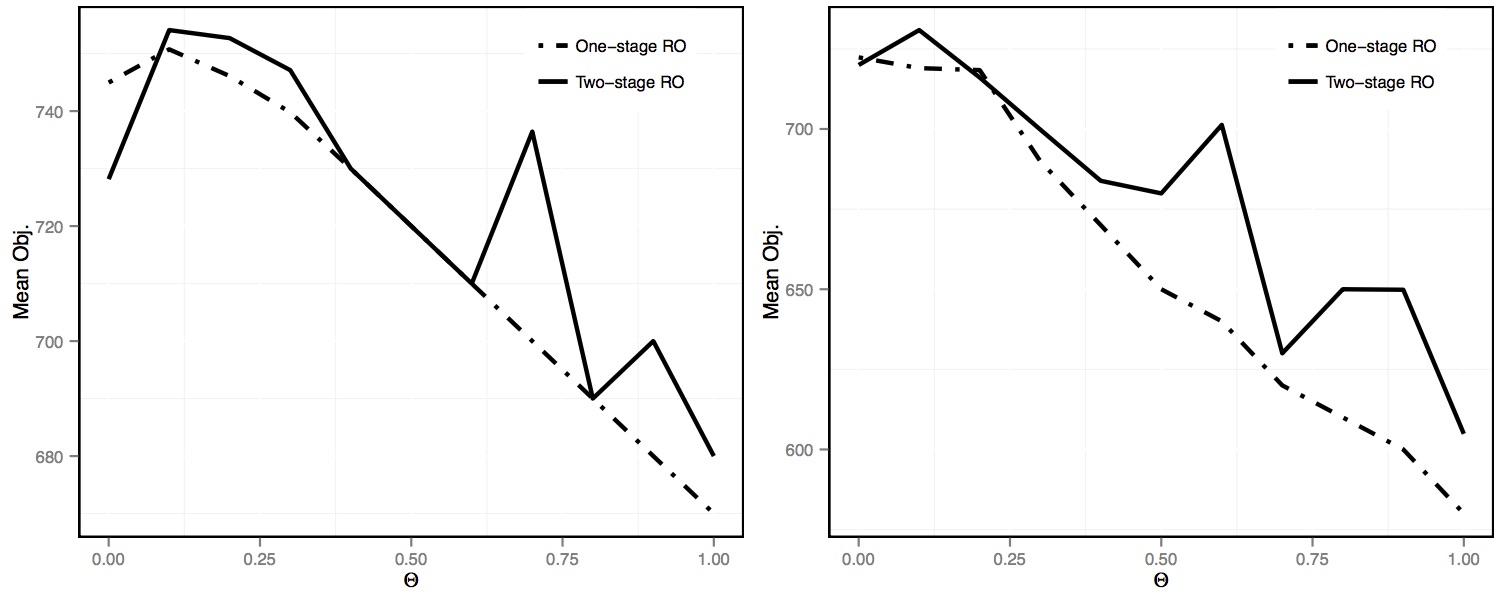} 

}

\caption{Comparision between the one-stage RO and two-stage RO with sequential recourse, $L=90$, $\hat{d}_{ij}= 0.2\bar{d}_{ij} $(left), $\hat{d}_{ij}= 0.5\bar{d}_{ij} $(right)}\label{fig:2}
\end{figure}

\end{knitrout}

\begin{knitrout}
\definecolor{shadecolor}{rgb}{0.969, 0.969, 0.969}\color{fgcolor}\begin{figure}[!h]

{\centering \includegraphics[width=\maxwidth]{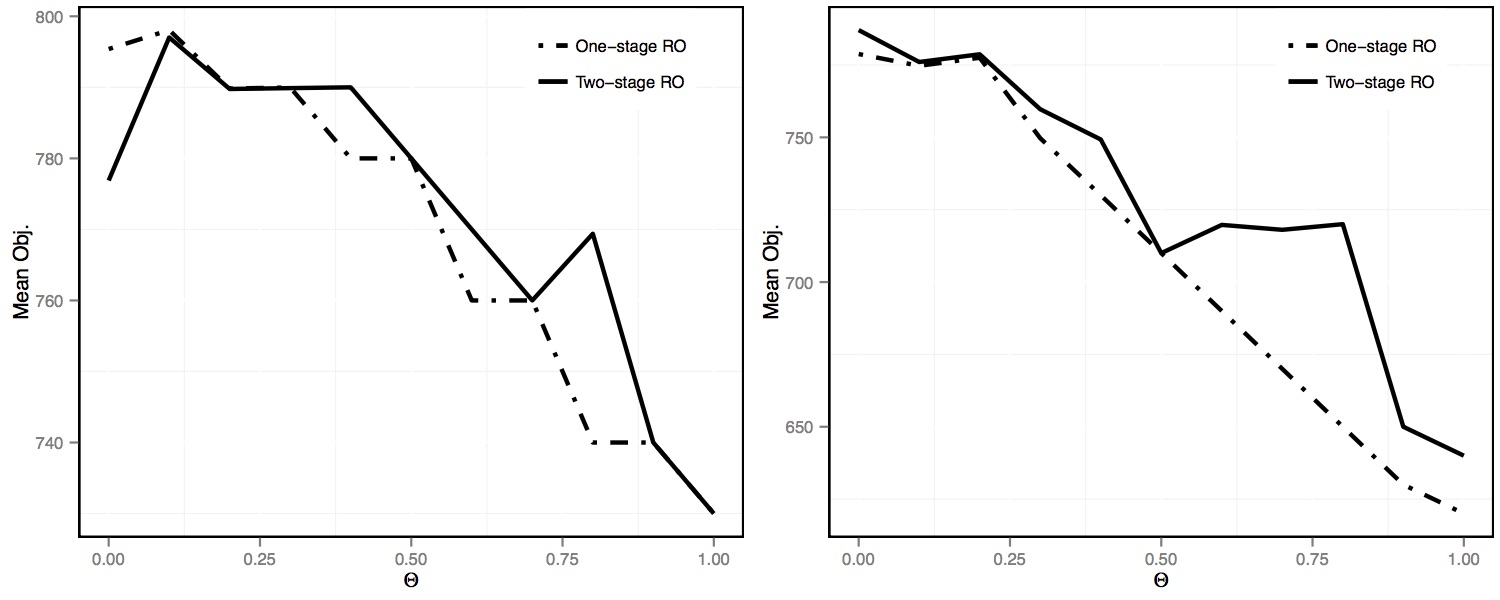} 

}

\caption{Comparision between the one-stage RO and two-stage RO with sequential recourse, $L=100$, $\hat{d}_{ij}= 0.2\bar{d}_{ij} $(left), $\hat{d}_{ij}= 0.5\bar{d}_{ij} $(right)}\label{fig:3}
\end{figure}

\end{knitrout}

\section{Conclusions}
\label{section:conclusion}
In this paper, we considered the OPSW with recourse actions. Based on different uncertainty realization ways, we presented two recourse models: one is the \hyperref[eq:rs]{Recourse-Sequential} model and the other is the \hyperref[eq:rc]{Recourse-Concurrent} model. The \hyperref[eq:rc]{Recourse-Concurrent} model has less decision variables and less constraints and is computationally more attractive. We applied the two-stage RO paradigm to the OPSW and introduced two two-stage RO models based on two recourse models. We theoretically proved that with the \emph{box uncertainty set} defined, the two-stage robust models are equivalent to their corresponding static robust models and the two two-stage robust models are also equivalent to each other. Subsequently, the two-stage robust models for OPSW can be determined to optimality by solving their corresponding static models. Comparative studies between the two-stage robust models and one-stage robust model for OPSW showed the effectiveness and superiority of the proposed two-stage robust models for tackling the two-stage OPSW.

We provide the following research directions as our future works:
\begin{enumerate}
\item The two-stage robust models for OPSW proposed in this paper are based on the \emph{box uncertainty set}, therefore we can draw theoretical conclusions on the equivalence between the two-stage robust models and their corresponding static robust models. Other uncertainty sets (e.g. the polyhedral uncertianty set) could be defined in the two-stage robust models and the performance of the corresponding static robust models can be studied.
\item The OPSW considered in this paper is with a two-stage setting where the decision variables are classified into two categories. As the planned path is executed dynamically and the nodes are visited sequentially, hence the OPSW can be viewed as a multi-stage decision making problem. So we can apply the multi-stage RO methodology and build a multi-stage robust model for the OPSW with a multi-stage setting.
\end{enumerate}

\section*{Conflict of Interests}
The authors declare that there is no conflict of interest regarding the publication of this manuscript.

\def\ack{\section*{Acknowledgments}%
  \addtocontents{toc}{\protect\vspace{6pt}}%
  \addcontentsline{toc}{section}{Acknowledgments}%
}
\ack{This work was supported by National Natural Science Foundation of China (No. 61573277, 71471158), the Research Grants Council of the Hong Kong Special Administrative Region, China (Project No. PolyU 15201414), the Fundamental Research Funds for the Central Universities, the Open Research Fund of the State Key Laboratory of Astronautic Dynamics under Grant 2015ADL-DW403, and the Scientific Research Foundation for the Returned Overseas Chinese Scholars, State Education Ministry, Natural Science Basic Research Plan in Shaanxi Province of China (No. 2015JM6316). The authors also would like to thank The Hong Kong Polytechnic University Research Committee for financial and technical support.}

\bibliographystyle{model1-num-names}
\bibliography{2ro.bib}

\end{document}